     \def\section{\@startsection{section}{1}%
      \z@{.7\linespacing\@plus\linespacing}{.5\linespacing}%
     {\bfseries
     \centering
     }}
     \def\@secnumfont{\bfseries}
\newtheorem{thm}{Theorem}[section]
\newtheorem{prop}[thm]{Proposition}
\newtheorem{cor}[thm]{Corollary}
\theoremstyle{definition}
\newtheorem{definition}[thm]{Definition}
\newtheorem{example}{Example}
\theoremstyle{remark}
\numberwithin{equation}{section}
\def\title#1{{\Large\bf  \begin{center} #1 \vspace{0pt} \end{center}  } }
\def\authors#1{{\large\bf \begin{center} #1 \vspace{0pt} \end{center} } }
\def\university#1{{\sl \begin{center} #1 \vspace{0pt} \end{center} } }
\newcommand{\vertiii}[1]{{\left\vert\kern-0.25ex\left\vert\kern-0.25ex\left\vert 
#1 \right\vert\kern-0.25ex\right\vert\kern-0.25ex\right\vert}}
\def \tr   {\text {\rm tr}}
\def \conv   {\text {\rm conv}}
\begin{document}
%
%
%

\title{First order sensitivity analysis of symplectic eigenvalues}

\bigskip

%
%

\authors{Hemant Kumar Mishra} 


\smallskip
%
%

\university{Indian Statistical Institute, New Delhi 110016, 
India\\hemant16r@isid.ac.in}%
\date{\today}

\begin{abstract}
\sloppy
For every $2n \times 2n$ positive definite matrix $A$ there are $n$ positive numbers $d_1(A) \leq \ldots \leq  d_n(A)$ associated with $A$ called the symplectic eigenvalues of $A.$ 
It is known that $d_m$ are continuous functions of $A$ but are not differentiable in general.
In this paper, we show that the directional derivative of $d_m$ exists and derive its expression.
We also discuss various subdifferential properties of $d_m$ such as Clarke and Michel-Penot subdifferentials. 
\end{abstract}

\makeatletter
\@setabstract
\makeatother

\vskip0.3in
\footnotetext{\noindent {\bf AMS Subject Classifications:} 15A18, 15A48, 26B05, 26B27.}

 \footnotetext{\noindent {\bf Keywords : } Positive definite matrix, symplectic eigenvalue, Fenchel subdifferential, Clarke subdifferential, Michel-Penot subdifferential, directional derivative.}

%
%

\section{Introduction}
\sloppy
Let $\mathbb{S}(n)$ be the space of $n \times n$ real symmetric matrices with the usual inner product defined by $\langle A, B \rangle = \tr AB,$ for all $A,B$ in $ \mathbb{S}(n).$
Let $\mathbb{P}(2n)$ be the subset of $\mathbb{S}(2n)$ consisting of the positive definite matrices.
Denote by $J$ the $2n\times 2n$ matrix
\begin{equation*}
J=\begin{pmatrix}O & I_n\\
-I_n & O\end{pmatrix},
\end{equation*}
 where $I_n$ is the $n\times n$ identity matrix. 
A $2n\times 2n$ real matrix $M$ is called a {\it symplectic matrix} if
$$M^TJM=J.$$
 Williamson's theorem \cite{dms, will} states that for every element $A$ in $ \mathbb{P}(2n)$  there exists a symplectic matrix $M$ such that 
\begin{equation} \label{2eqn31}
M^TAM=\begin{pmatrix}D & O\\
O & D\end{pmatrix},
\end{equation}
where $D$ is an $n\times n$ positive diagonal matrix with diagonal elements $d_1(A) \le \cdots \le d_n(A).$ 
The diagonal entries of $D$ are known as symplectic eigenvalues (Williamson parameters) of $A.$
Symplectic eigenvalues occur in various fields of mathematics and physics. 
In quantum information theory, von Neumann entropy is determined by the symplectic eigenvalues of a generic covariance matrix \cite{arl, demarie, p, sis}.
See also \cite{sanders, koenig, safranek}. 
\sloppy
Much more interest  is shown in symplectic eigenvalues by mathematicians and physicists in the past few years due to their importance in many areas such as 
symplectic topology \cite{hofer}, quantum mechanics  \cite{degosson} and Hamiltonian mechanics \cite{ar, nss}. 
Some interesting work has been done on symplectic eigenvalues recently. 
Various inequalities about these numbers, some variational principles, a perturbation theorem, 
and some inequalities between symplectic eigenvalues and ordinary eigenvalues are obtained in \cite{bj}. In a more recent work \cite{jm}, 
many results on differentiability and analyticity of symplectic eigenvalues, some inequalities about these numbers  involving two matrices are obtained.
It is known that the ordered symplectic eigenvalue maps $d_1,  \ldots, d_n$ are continuous but not differentiable in general. 
This is illustrated in \cite[Example 1]{jm}.
Our goal is to further investigate these maps.
In this paper, we show that the first order directional derivatives of these maps exist, and compute the expression of their directional derivatives.
We also discuss some subdifferential properties of $d_1, \ldots, d_n,$ namely, Fenchel subdifferential, Clarke subdifferential, and Michel-Penot subdifferential.
Subdifferentials are useful in the field of optimization and non-smooth analysis.
They provide various characterisations of optimality conditions such as local minimizer, local sharp minimizer, local blunt minimizer \cite{penot, borwein, roshchina, zalinescu}.
In numerical methods, subdifferentials are useful in minimizing  local Lipschitzian functions \cite{bagirov}. 
The class of convex functions enjoys many useful and interesting differential properties and they are widely studied \cite{borwein}.

A positive number $d$ is a symplectic eigenvalue of $A$ if and only if $\pm d$ are eigenvalues of the Hermitian matrix $\iota A^{1/2}J A^{1/2}$ \cite[Lemma 2.2]{jm}.
Eigenvalues of Hermitian matrices have rich theory, and have been studied for a long time.
Therefore it seems natural to study the properties of symplectic eigenvalues by applying the well developed theory of eigenvalues.
But it is difficult to obtain results on symplectic eigenvalues by the direct approach due to the complicated form of $\iota A^{1/2}J A^{1/2}.$ 
For instance, the map $A \mapsto \iota A^{1/2} J A^{1/2}$ from $\mathbb{P}(2n)$ to the space of $2n \times 2n$ Hermitian matrices is neither convex nor concave (see Example \ref{2ex1}).
Therefore it is not apparent whether the sums of eigenvalues of $\iota A^{1/2}JA^{1/2}$ are convex or concave functions of $A.$ 
Our methods make use of independent theory for symplectic eigenvalues developed in \cite{bj, jm}.

Eigenvalue maps of Hermitian matrices can be written as  difference of convex functions using Ky Fan's extremal characterisation \cite[Theorem 1]{fan} of sum of eigenvalues of Hermitian matrices.
It is this property of eigenvalues that plays a key role in the study of their various subdifferentials and directional derivatives properties  \cite{hiriart1999, hiriart1995, torki}.
$\text{Theorem } 5$ of \cite{bj} gives an extremal characterisation of sum of symplectic eigenvalues,
and this enables us to write symplectic eigenvalue maps as difference of convex maps.
This characterisation plays a key role in our paper.
The work by Hiriart-Urruty et al. \cite{hiriart1999, hiriart1995} for eigenvalues was motivation for our present work.

\begin{example} \label{2ex1}
Let $\Phi(A) = \iota A^{1/2} J A^{1/2}$ for all $A$ in $\mathbb{P}(2n).$
Let $I$ be the $2 \times 2$ identity matrix and 
\[ A = \begin{pmatrix}
1 & 0 \\
0 & 4
\end{pmatrix}. \]
We have $\Phi(I)=  \iota J,$
$$ \Phi(A) = \iota  \begin{pmatrix}
0 & 2 \\
- 2 & 0
\end{pmatrix},$$
and 
$$ \Phi\left( \frac{I+A}{2} \right) = \frac{\iota}{2}  \begin{pmatrix}
0 & \sqrt{10} \\
- \sqrt{10} & 0
\end{pmatrix}.$$
This gives 
$$ \Phi\left( \frac{I+A}{2} \right) - \frac{1}{2} ( \Phi(I) + \Phi(A) ) = \frac{\iota}{2} \begin{pmatrix}
0 & \sqrt{10}- 3 \\
-(\sqrt{10}- 3) & 0
\end{pmatrix}.$$
Here $\Phi\left( \frac{I+A}{2} \right) - \frac{1}{2} ( \Phi(I) + \Phi(A) )$ is neither negative semidefinite nor positive semidefinite. 
Therefore, $\Phi$ is neither convex nor concave.
\end{example}

The paper is organized as follows:
In $\text{Section } \ref{2prel},$  we recall the definitions of $\text{Fenchel } (\ref{2eqn25}),$ $\text{Clarke } (\ref{2eqn23})$ and $\text{Michel-Penot } (\ref{2eqn24})$ subdifferentials 
and derivatives, and some of their properties. 
We also discuss some basic properties of symplectic eigenvalues that are useful later in the paper.
In $\text{Section } \ref{2fensub},$ for every positive integer $m \leq n,$
we introduce a map $\sigma_m: \mathbb{S}(2n) \to (-\infty, \infty]$
such that $\sigma_m(A) = -2 \sum_{j=1}^{m} d_j(A)$ for all $A \in \mathbb{P}(2n).$ 
We calculate the  Fenchel subdifferential of  $\sigma_m$ $(\text{Theorem } \ref{2mainthm1}).$
We also derive the expressions for the directional derivatives of $\sigma_m$ $(\text{Theorem } \ref{2thm4})$ and $d_m$ $(\text{Theorem } \ref{2thm1}).$ 
In $\text{Section } \ref{2mpsub},$ we find the Clarke  and Michel-Penot subdifferentials of $-d_m.$
We show that these subdifferentials coincide at $A,$ and are independent of the choices of $m$ corresponding to equal symplectic eigenvalues of $A.$ 
As an application of the Clarke and Michel-Penot subdifferentials, we give an alternate proof of the monotonicity property of symplectic eigenvalues $(\text{Corollary } \ref{2cor3}).$

\section{Preliminaries} \label{2prel}
In this section, we recall the definitions and some properties of three kinds of subdifferentials, and discuss some simple properties of symplectic eigenvalues.

The notion of various subdifferentials and directional derivatives exist for more general spaces.
For our present work we will only be discussing  subdifferentials of maps on the space of symmetric matrices.

Let  $\mathcal{O}$ be an open subset of $\mathbb{S}(n)$ and $A$ be an element of $ \mathcal{O}.$
Let $f: \mathbb{S}(n) \rightarrow (-\infty, \infty]$ be a function such that $f(\mathcal{O}) \subseteq \mathbb{R}.$
For $H$ in $ \mathbb{S}(n),$ if the limit
$$\lim_{t \to 0^+} \dfrac{f(A+tH)-f(A)}{t}, $$
exists in $\mathbb{R}$, we say that $f^{\prime} (A;H)$ is defined and is equal to the limit.
We say that $f$ is directionally differentiable at $A$ if $f^{\prime} (A;H)$ exists for all $H$ in $ \mathbb{S}(n).$ 
In this case, we call the map $f^{\prime} (A; \cdot): \mathbb{S}(n) \to \mathbb{R}$ the directional derivative of $f$ at $A.$

The {\it Fenchel subdifferential } of $f$  at $A$  is defined by 
\begin{align} \label{2eqn25}
\partial f(A) = \{ X \in \mathbb{S}(n): \langle X, H-A \rangle \leq f(H)-f(A) \ \  \forall H \in \mathbb{S}(n) \}.
\end{align}
If $f$ is a convex map then $\partial f(A)$ is a non-empty, convex and compact set \cite{penot, zalinescu}.

The {\it Clarke directional derivative} of $f$ at $A$ is defined as the function 
\begin{equation*} 
H \in \mathbb{S}(n) \mapsto f^{\circ} (A;H) = \limsup_{X \to A, t \to 0^+} \dfrac{f(X+tH)-f(X)}{t},
\end{equation*}
and the {\it Clarke subdifferential}  of $f$ at $A$ is given by
\begin{equation} \label{2eqn23}
\partial^{\circ} f(A) = \{X \in \mathbb{S}(n): \langle X, H \rangle \leq f^{\circ}(A;H) \ \ \forall H \in \mathbb{S}(n) \}.
\end{equation}

If the function $f$ is directionally differentiable at every point in $\mathcal{O},$ then
\begin{equation}  \label{2eqn21}
f^{\circ}(A;H) = \limsup_{X \to A} f^{\prime}(X;H).
\end{equation}

The {\it Michel-Penot directional derivative} of $f$ at $A$ is defined by the function
\begin{equation*}
H \in \mathbb{S}(n) \mapsto f^{\diamond}(A;H)= \sup_{X \in \mathbb{S}(n)} \limsup_{t \to 0^+} \dfrac{f(A+tX+tH)-f(A+tX)}{t},
\end{equation*}
and the {\it Michel-Penot subdifferential} of $f$ at $A$ is defined as
\begin{equation} \label{2eqn24}
\partial^{\diamond} f(A) = \{X \in \mathbb{S}(n): \langle X, H \rangle \leq f^{\diamond}(A;H) \ \ \forall H \in \mathbb{S}(n) \}.
\end{equation}
If the function $f$ is directionally differentiable at $A,$ then we have 
\begin{equation}  \label{2eqn22}
 f^{\diamond}(A;H)= \sup_{X \in \mathbb{S}(n)} \{ f^{\prime}(A;H+X)-  f^{\prime}(A;H)\}'
\end{equation}
for all $H$ in $ \mathbb{S}(n).$

Let $A$ be an element of $\mathbb{P}(2n)$ and $m \leq n$ be a positive integer.
Let $u_1, \ldots, u_n,$ $ v_1, \ldots, v_n$ represent the $2n$ columns of $M$ in Williamson's theorem. 
The equation $(\ref{2eqn31})$ is equivalent to  the following conditions
\begin{align}
&Au_j  = d_j(A)Jv_j, Av_j =-d_j(A)Ju_j, \label{2eq01} \\
&\langle u_j, Ju_k \rangle = \langle v_j, Jv_k \rangle =0, \label{2eq02} \\
& \langle u_j, Jv_k \rangle = \delta_{jk} \label{2eq03}
\end{align}
for all $1 \leq j,k \leq n.$
Here $\delta_{jk}=1$ if $j=k,$ and $0$ otherwise.
A pair of vectors $(u_j, v_j)$ satisfying $(\ref{2eq01})$ is called a pair of symplectic eigenvectors of $A$ corresponding to the symplectic eigenvalue $d_j(A),$ 
and is called a { \it normalised } pair of symplectic eigenvectors of $A$ corresponding to the symplectic eigenvalue $d_j(A)$ if it also satisfies $\langle u_j, J v_j \rangle = 1.$
We call a set $\{u_j,v_j \in \mathbb{R}^{2n}: 1 \leq j \leq m \}$ {\it symplectically orthogonal} if it satisfies $(\ref{2eq02})$, 
and we call it {\it symplectically  orthonormal} if it also satisfies $(\ref{2eq03})$ for $1 \leq j, k \leq m.$
A symplectically orthonormal set with $2n$ vectors is called a {\it symplectic basis} of $\mathbb{R}^{2n}.$
We denote by $Sp(2n)$ the set of $2n \times 2n $ symplectic matrices.
 Let $Sp(2n,2m)$ denote the set of real $2n \times 2m$ matrices $S=[u_1, \ldots, u_m, v_1, \ldots, v_m]$ such that its columns form a symplectically orthonormal set, 
and denote by $Sp(2n, 2m, A)$ the subset of $Sp(2n,2m)$ with the extra condition (\ref{2eq01}) for all $j=1,2, \ldots, m.$
We call a symplectic and orthogonal matrix {\it orthosymplectic } matrix.
One can find more on symplectic matrices and symplectic eigenvalues in \cite{dms, degosson, jm}.

\begin{definition}
Let $S$ be a real matrix with $2m$ columns and $\alpha_1, \ldots, \alpha_k$ be positive integers with $\alpha_1 + \ldots+ \alpha_k=m.$
Let $\mathcal{I}_1, \ldots, \mathcal{I}_k$ be the partition of $\{1, \ldots, 2m\}$ given by
\begin{align*}
\mathcal{I}_1 &= \{1, \ldots, \alpha_1, m+1, \ldots, m+\alpha_1\}, \\
\mathcal{I}_2&= \{ \alpha_1+1, \ldots, \alpha_1+\alpha_2, m+\alpha_1+1, \ldots, m+\alpha_1+\alpha_2 \}, \\
\vdots \\
\mathcal{I}_k &= \{ (\alpha_1+ \ldots + \alpha_{k-1})+1, \ldots, (\alpha_1+ \ldots + \alpha_{k-1})+\alpha_k , \\
& \ \ \ \ \ \ \ \ m+(\alpha_1+ \ldots + \alpha_{k-1})+1,  \ldots, m+(\alpha_1+ \ldots + \alpha_{k-1})+\alpha_k \}.
\end{align*}
By the expression
$$S=S_{\mathcal{I}_1} \diamond \ldots \diamond S_{\mathcal{I}_k}$$
we mean that the submatrix of $S$ consisting of the columns of $S$ indexed by $\mathcal{I}_j$ is $S_{\mathcal{I}_j},$ $j=1,\ldots, k.$
We call it  symplectic column partition of $S$ of order $(\alpha_1, \ldots, \alpha_k).$
\end{definition}
For example, let $m=6$ and $ \alpha_1=2, \alpha_2=3, \alpha_3=1.$ 
Then we have 
\begin{align*}
\mathcal{I}_1 &= \{1, 2, 7, 8\}, \\
\mathcal{I}_2 &=\{3, 4, 5, 9, 10, 11 \}, \\
\mathcal{I}_3 &= \{6, 12\}.
\end{align*}
We observe that if $I$ is the $2m \times 2m$ identity matrix then 
$$S_{\mathcal{I}_1} = S I_{\mathcal{I}_1}, \ldots, S_{\mathcal{I}_k} = S I_{\mathcal{I}_k}.$$
So the symplectic column partition of $S$ of order $(\alpha_1, \ldots, \alpha_k)$ is given by
$$S=S I_{\mathcal{I}_1} \diamond \ldots \diamond S I_{\mathcal{I}_k}.$$

The following proposition gives a property of symplectic column partition. 
The proof is straightforward, so we omit it.
\begin{prop} \label{2prop4} 
Let $S$ be a real matrix with $2m$ columns and $\alpha_1, \ldots, \alpha_k$ be positive integers whose sum is $m.$
Let $S=S_{\mathcal{I}_1}\diamond \ldots \diamond S_{\mathcal{I}_k}$ be the symplectic column partition of $S$ of order $(\alpha_1, \ldots, \alpha_k).$
We have
$$TS=TS_{\mathcal{I}_1} \diamond \ldots \diamond TS_{\mathcal{I}_k},$$ 
where $T$ is a matrix of appropriate size.
\end{prop}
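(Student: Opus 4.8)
The plan is to reduce the claim to the associativity of matrix multiplication, using the column-selection identity recorded just before the statement. Since $S$ has $2m$ columns and $T$ is of a size making the product $TS$ defined, the matrix $TS$ again has $2m$ columns. Consequently the same index sets $\mathcal{I}_1, \ldots, \mathcal{I}_k$ partition the columns of $TS$, and the symplectic column partition of $TS$ of order $(\alpha_1, \ldots, \alpha_k)$ is meaningful.

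First I would recall that, writing $I$ for the $2m \times 2m$ identity matrix, the block $S_{\mathcal{I}_j}$ is the submatrix of $S$ whose columns are those indexed by $\mathcal{I}_j$, and this equals the product $S I_{\mathcal{I}_j}$, where $I_{\mathcal{I}_j}$ collects the columns of $I$ indexed by $\mathcal{I}_j$. The identical description applies to $TS$: by the definition of the symplectic column partition, the block $(TS)_{\mathcal{I}_j}$ consisting of the columns of $TS$ indexed by $\mathcal{I}_j$ equals $(TS) I_{\mathcal{I}_j}$.

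The only computation is then the chain of equalities
\[
(TS)_{\mathcal{I}_j} = (TS) I_{\mathcal{I}_j} = T(S I_{\mathcal{I}_j}) = T S_{\mathcal{I}_j},
\]
valid for each $j = 1, \ldots, k$, where the middle equality is associativity of matrix multiplication. This identifies each block of the symplectic column partition of $TS$ with $T S_{\mathcal{I}_j}$, so the partition reads $TS = T S_{\mathcal{I}_1} \diamond \ldots \diamond T S_{\mathcal{I}_k}$, as claimed.

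There is essentially no analytic obstacle; the only point requiring care is the bookkeeping, namely confirming that $TS$ still has exactly $2m$ columns so that the index sets $\mathcal{I}_j$, and hence the very notion of symplectic column partition, transfer verbatim from $S$ to $TS$. This is why the authors regard the proof as straightforward.
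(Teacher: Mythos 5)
Your proof is correct: the chain $(TS)_{\mathcal{I}_j} = (TS)I_{\mathcal{I}_j} = T(SI_{\mathcal{I}_j}) = TS_{\mathcal{I}_j}$ via associativity is exactly the argument the paper has in mind, since it records the identity $S_{\mathcal{I}_j} = SI_{\mathcal{I}_j}$ immediately before the proposition and then omits the proof as straightforward. Your bookkeeping remark that $TS$ has the same number of columns as $S$, so the index sets $\mathcal{I}_j$ apply verbatim, is the right point to make explicit.
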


\begin{prop} \label{2prop2}
Let $A \in \mathbb{P}(2n)$ and $m \leq n$ be any positive integer.
Every symplectically orthogonal set consisting of $m$ symplectic eigenvector pairs of $A$  can be extended to a 
symplectically orthogonal set 
consisting of $n$ symplectic eigenvector pairs of $A.$ 
\end{prop}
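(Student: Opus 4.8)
The plan is to reduce the statement to an elementary problem inside each ``symplectic eigenspace'' of $A$ and there solve it by standard symplectic linear algebra. Write $\omega(x,y)=\langle x,Jy\rangle$ for the standard symplectic form on $\mathbb{R}^{2n}$. First I would observe that a pair $(u,v)$ satisfying $(\ref{2eq01})$ for a value $d$ forces $v=-\tfrac1d JAu$ and $-(JA)^2u=d^2u$, and likewise $-(JA)^2v=d^2v$; hence both members of an eigenvector pair for $d$ lie in the eigenspace $E_{d^2}:=\ker\big(-(JA)^2-d^2I\big)$. Since $A^{1/2}(JA)A^{-1/2}=A^{1/2}JA^{1/2}$ is skew-symmetric, $-(JA)^2$ is similar to the symmetric positive definite matrix $-(A^{1/2}JA^{1/2})^2$, so it is diagonalizable with positive eigenvalues; by the Hermitian reformulation recalled above (or Williamson's theorem) these eigenvalues are precisely the distinct numbers $\lambda=d^2$ among $d_1(A),\dots,d_n(A)$, and $\mathbb{R}^{2n}=\bigoplus_{\lambda}E_\lambda$ with $\dim E_\lambda=2r_\lambda$, where $r_\lambda$ is the multiplicity of $\sqrt\lambda$ and $\sum_\lambda r_\lambda=n$.

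Next I would record the structure carried by each $E_\lambda$. A direct computation gives $\omega(JAx,y)=\langle x,Ay\rangle=-\omega(x,JAy)$, so $JA$ is $\omega$-skew-adjoint and $-(JA)^2$ is $\omega$-self-adjoint; consequently $E_\lambda$ and $E_{\lambda'}$ are $\omega$-orthogonal for $\lambda\ne\lambda'$, and since $\omega$ is nondegenerate on $\mathbb{R}^{2n}$ it restricts to a symplectic form on each $E_\lambda$. On $E_\lambda$ one has $(JA)^2=-\lambda I$, equivalently $AJA=\lambda J$, so $\mathcal{J}:=\tfrac1{\sqrt\lambda}JA$ is a complex structure ($\mathcal{J}^2=-I$) preserving $E_\lambda$, and the eigenvector pairs of $A$ with eigenvalue $\sqrt\lambda$ are exactly the pairs $(u,-\mathcal{J}u)$ with $0\ne u\in E_\lambda$. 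Using $AJA=\lambda J$ one then checks the crucial identity $\langle v_j,Jv_k\rangle=\langle u_j,Ju_k\rangle$ for $u_j,u_k\in E_\lambda$ with $v_i=-\mathcal{J}u_i$; therefore the two orthogonality conditions in $(\ref{2eq02})$ among pairs lying in a single $E_\lambda$ collapse to the single requirement $\omega(u_j,u_k)=0$, i.e. the vectors $u_j$ span an \emph{isotropic} subspace of $(E_\lambda,\omega)$.

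With this dictionary the extension is immediate. I would group the given $m$ pairs by their symplectic eigenvalue; those with value $\sqrt\lambda$ contribute vectors $u_j\in E_\lambda$ spanning an isotropic subspace $L_\lambda$. By the classical fact that every isotropic subspace sits inside a Lagrangian one, extend $L_\lambda$ to a Lagrangian $\widehat{L}_\lambda\subseteq E_\lambda$ of dimension $r_\lambda$ and complete the given $u_j$ to a basis $u_1,\dots,u_{r_\lambda}$ of $\widehat{L}_\lambda$. The pairs $(u_i,-\mathcal{J}u_i)$, $i=1,\dots,r_\lambda$, are eigenvector pairs in $E_\lambda$ that extend the original ones and are symplectically orthogonal, because $\widehat{L}_\lambda$ is isotropic (so $\omega(u_i,u_j)=0$) and by the identity of the previous paragraph. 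Doing this in every $E_\lambda$ and taking the union yields $\sum_\lambda r_\lambda=n$ pairs; those from different eigenspaces are automatically symplectically orthogonal by the $\omega$-orthogonality of the $E_\lambda$. Finally, compatibility of $\mathcal{J}$ with $\omega$, namely $\omega(\mathcal{J}x,x)=\tfrac1{\sqrt\lambda}\langle x,Ax\rangle>0$ for $x\ne0$, gives $\widehat{L}_\lambda\cap\mathcal{J}\widehat{L}_\lambda=0$, so the $2r_\lambda$ vectors $\{u_i,v_i\}$ are linearly independent and these are genuine distinct pairs.

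The main obstacle is the bookkeeping forced by repeated symplectic eigenvalues: everything hinges on first isolating the canonical eigenspaces $E_\lambda$ and then proving that symplectic orthogonality of eigenvector pairs within $E_\lambda$ is \emph{equivalent} to isotropy of the associated vectors in $(E_\lambda,\omega)$ — this is where the identities $AJA=\lambda J$ and $\langle v_j,Jv_k\rangle=\langle u_j,Ju_k\rangle$ on $E_\lambda$ carry the real weight. Once this reduction is secured, the extension itself is merely the ``isotropic extends to Lagrangian'' principle applied eigenspace by eigenspace. A secondary point to dispatch is that the given set should consist of linearly independent pairs for the count $m\to n$ to be meaningful, and this independence is supplied within each $E_\lambda$ by the compatibility of $\mathcal{J}$ noted above.
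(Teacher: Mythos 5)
Your proof is sound in its essentials, and it takes a genuinely different route from the paper's. The paper disposes of the proposition in two lines: by Proposition 2.3 of \cite{jm}, symplectic eigenvector pairs and their orthogonality relations correspond to eigenvectors of the Hermitian matrix $\iota A^{1/2}JA^{1/2}$ and their unitary orthogonality, so the claim reduces to the standard fact that an orthogonal set of eigenvectors of a Hermitian matrix extends to an orthogonal eigenbasis of $\mathbb{C}^{2n}$. You never complexify: you decompose $\mathbb{R}^{2n}$ into the eigenspaces $E_\lambda$ of the real operator $-(JA)^2$, check that each $E_\lambda$ is a symplectic subspace preserved by the compatible complex structure $\mathcal{J}=\lambda^{-1/2}JA$, identify eigenvector pairs for $\sqrt{\lambda}$ with nonzero vectors $u\in E_\lambda$ via $u\mapsto(u,-\mathcal{J}u)$, show that condition (\ref{2eq02}) within a single $E_\lambda$ amounts to isotropy of the span of the $u_j$, and conclude by extending an isotropic subspace to a Lagrangian. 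I verified your key identities ($\omega(JAx,y)=-\omega(x,JAy)$ with $\omega(x,y)=\langle x,Jy\rangle$, $AJAu=\lambda Ju$ for $u\in E_\lambda$, and $\langle v_j,Jv_k\rangle=\langle u_j,Ju_k\rangle$), and the count $\sum_\lambda r_\lambda=n$ is right. The paper's route buys brevity by outsourcing the structure theory to \cite{jm}; yours buys a self-contained, purely real argument that makes the geometry explicit---in effect you reconstruct over $\mathbb{R}$ exactly the correspondence that \cite[Proposition 2.3]{jm} encodes, with $\mathcal{J}$ playing the role of multiplication by $\iota$ on the relevant eigenspaces.

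The one step that needs repair is the completion of the given vectors $u_1,\dots,u_s$ in a fixed $E_\lambda$ to a basis of the Lagrangian $\widehat{L}_\lambda$: this requires the given $u_j$ to be linearly independent, and your justification (``supplied by the compatibility of $\mathcal{J}$'') does not deliver it. Compatibility gives $\widehat{L}_\lambda\cap\mathcal{J}\widehat{L}_\lambda=\{0\}$, i.e.\ independence of the family you \emph{construct}, not of the input vectors. Under the paper's literal definition of symplectic orthogonality (condition (\ref{2eq02}) alone, which places no constraint on $\langle u_j,Jv_k\rangle$ for $j\neq k$), independence can genuinely fail: the two pairs $(u,v)$ and $(2u,2v)$ form a symplectically orthogonal set of eigenvector pairs. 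The repair is cheap: for such degenerate inputs, extend the span of the given $u_j$ to a Lagrangian and adjoin only as many pairs $(w,-\mathcal{J}w)$ as needed to reach $n$ in total; or note that whenever the cross conditions $\langle u_j,Jv_k\rangle=0$ for $j\neq k$ also hold---in particular for the symplectically orthonormal sets to which the proposition is actually applied in Corollary \ref{2cor2}---pairing a vanishing combination $\sum_j(a_ju_j+b_jv_j)=0$ against $Jv_k$ and $Ju_k$ forces $a_k=b_k=0$. (This degenerate case equally afflicts the paper's own reduction, since $(u,v)$ and $(2u,2v)$ map to proportional, non-orthogonal eigenvectors of $\iota A^{1/2}JA^{1/2}$; it is a defect of the weak definition, not of your method.)
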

\begin{proof}
We know that any orthogonal subset of $\mathbb{C}^{2n}$ consisting of eigenvectors of a 
Hermitian matrix can be extended to an orthogonal basis of $\mathbb{C}^{2n}.$ 
Therefore, the result easily follows from \cite[Proposition 2.3]{jm}.
\end{proof}

\begin{cor} \label{2cor2}
Let $A \in \mathbb{P}(2n)$ and $m \leq n$ be any positive integer.
Every symplectically orthonormal set consisting of $m$ symplectic eigenvector pairs of $A$  
can be extended to a symplectic basis of $\mathbb{R}^{2n}$ consisting of symplectic eigenvector pairs of $A.$ 
\end{cor}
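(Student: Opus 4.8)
The plan is to transfer the question to the Hermitian eigenvalue problem attached to $A$ and there imitate the proof of Proposition~\ref{2prop2}, with \emph{orthonormal} in place of \emph{orthogonal}. Write $K = \iota A^{1/2} J A^{1/2}$ for the Hermitian matrix of \cite[Lemma 2.2]{jm}. If $(u,v)$ is a symplectic eigenvector pair of $A$ belonging to the symplectic eigenvalue $\lambda$, then, as in \cite[Proposition 2.3]{jm}, the vectors $p = A^{1/2}(u-\iota v)$ and $\overline{p} = A^{1/2}(u+\iota v)$ are eigenvectors of $K$ for the eigenvalues $\lambda$ and $-\lambda$, and conversely $u = A^{-1/2}\Re(p)$, $v = -A^{-1/2}\Im(p)$ recover the pair. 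A direct computation from \eqref{2eq01}, \eqref{2eq02}, \eqref{2eq03} shows that a family of such pairs $\{u_j,v_j\}$, with $(u_j,v_j)$ belonging to $\lambda_j$, is symplectically orthonormal if and only if the associated vectors $p_j$ are mutually orthogonal for the standard Hermitian inner product of $\mathbb{C}^{2n}$ and satisfy $\langle p_j,p_j\rangle = 2\lambda_j$ (the orthogonality $\langle p_j,\overline{p_k}\rangle = 0$ being automatic, since $\lambda_j \neq -\lambda_k$).

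Granting this dictionary, I would argue in three steps. First, translate the given symplectically orthonormal set $\{u_j,v_j:1\le j\le m\}$ into the orthogonal system $\{p_j,\overline{p_j}:1\le j\le m\}$ of eigenvectors of $K$. Second, extend it to an orthogonal basis $\{p_j,\overline{p_j}:1\le j\le n\}$ of $\mathbb{C}^{2n}$ consisting of eigenvectors of $K$ of the same conjugation-paired form, with $\langle p,p\rangle = 2d$ on each $d$-eigenspace. Third, translate this basis back through the correspondence to obtain pairs $\{u_j,v_j:m<j\le n\}$; by the dictionary these, together with the original pairs, form a symplectically orthonormal set of $n$ symplectic eigenvector pairs, that is, a symplectic basis of $\mathbb{R}^{2n}$.

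The only delicate point, and the main obstacle, is the second step: an arbitrary orthonormal extension of a set of $K$-eigenvectors need not consist of conjugate pairs, and so need not come from \emph{real} symplectic eigenvector pairs. To force the conjugation-paired structure I would exploit two features of $K$. Since $A\in\mathbb{P}(2n)$, the matrix $K = \iota A^{1/2}JA^{1/2}$ is invertible, so $0$ is not an eigenvalue and the spectrum of $K$ is symmetric about $0$, with eigenvalues of the form $\pm d$, $d>0$; and since $A^{1/2}$ and $J$ are real, $\overline{K}=-K$, so complex conjugation carries the $d$-eigenspace onto the $(-d)$-eigenspace. Consequently the orthogonal complement $W$ of $\operatorname{span}\{p_j,\overline{p_j}:1\le j\le m\}$ in $\mathbb{C}^{2n}$ is both $K$-invariant (as the orthogonal complement of a $K$-invariant subspace, $K$ being Hermitian) and closed under conjugation. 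Working inside $W$, for each eigenvalue $d>0$ of $K|_W$ I would pick an orthogonal basis of the $d$-eigenspace of $K|_W$ normalised by $\langle p,p\rangle = 2d$; its complex conjugates then furnish an orthogonal basis of the $(-d)$-eigenspace, and gathering these over all such $d$ produces an extension of exactly the required conjugation-paired form. Translating back then yields the desired completion, and the verification that the original $m$ pairs are untouched and that \eqref{2eq01}, \eqref{2eq02}, \eqref{2eq03} hold for all $1\le j,k\le n$ is routine.

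One could instead remain on the symplectic side: Proposition~\ref{2prop2} gives a symplectically orthogonal extension to $n$ pairs, each new pair can be rescaled so that $\langle u_j,Jv_j\rangle = 1$ because $\langle u_j,Au_j\rangle = \lambda_j\langle u_j,Jv_j\rangle$ with $\lambda_j>0$ and $\langle u_j,Au_j\rangle>0$, and \eqref{2eq01} already forces $\langle u_j,Jv_k\rangle = 0$ whenever $\lambda_j\neq\lambda_k$ (seen at once from the eigenvector description of $(u_j,v_j)$ in terms of $K$). The residual work is then a symplectic Gram--Schmidt process within each block of equal symplectic eigenvalues; the Hermitian route above has the advantage of treating all blocks uniformly.
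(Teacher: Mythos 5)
Your Hermitian-side argument is correct, but it is not the paper's route; in fact the paper's proof is exactly the ``alternative'' you sketch in your final paragraph, and nothing more. The paper applies Proposition~\ref{2prop2} to obtain a symplectically orthogonal extension by eigenvector pairs $(\tilde u_j,\tilde v_j)$, observes that $\langle \tilde u_j, J\tilde v_j\rangle = \tilde d_j^{-1}\langle \tilde u_j, A\tilde u_j\rangle > 0$, and rescales each new pair by $\langle \tilde u_j, J\tilde v_j\rangle^{-1/2}$ so that $\langle u_j, Jv_j\rangle = 1$; the cross conditions $\langle u_j, Jv_k\rangle = 0$ for $j\neq k$ are not addressed at all. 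Your proof instead transfers everything to $K=\iota A^{1/2}JA^{1/2}$, sets up the dictionary (mutually orthogonal $p_j$ with $\langle p_j,p_j\rangle = 2\lambda_j$ $\leftrightarrow$ symplectically orthonormal pairs), and performs the extension at the Hermitian level, using $\overline{K}=-K$ and the conjugation-stability of the orthogonal complement to force the conjugation-paired form. This is essentially an unfolding of the proof of Proposition~\ref{2prop2} (which itself cites the Hermitian correspondence of \cite[Proposition 2.3]{jm}) with the normalization carried along, so it subsumes the proposition and the corollary in a single self-contained argument.

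What your route buys is rigor, because the scruple you raise about the symplectic-side route is not hypothetical: it is precisely the point the paper's own proof glosses over. With the paper's definitions, ``symplectically orthogonal'' means only (\ref{2eq02}), and a symplectically orthogonal family of eigenvector pairs need not have vanishing cross terms within a block of equal symplectic eigenvalues. For instance, for $A=I_4$ the pairs $(e_1,e_3)$ and $(e_1+e_2,\,e_3+e_4)$ satisfy (\ref{2eq01}) and (\ref{2eq02}), yet $\langle e_1, J(e_3+e_4)\rangle = 1$, and no rescaling of the second pair (which must scale $u$ and $v$ by the same factor to preserve (\ref{2eq01})) can make this vanish. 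So rescaling an \emph{arbitrary} extension, as licensed by the statement of Proposition~\ref{2prop2}, does not produce a symplectic basis; the paper is implicitly using the stronger property that the extension constructed inside Proposition~\ref{2prop2} comes from an orthogonal family of eigenvectors of $K$, whence the cross terms already vanish. Your Hermitian route makes that property explicit (at the cost of length), and either it, or the ``symplectic Gram--Schmidt within equal-eigenvalue blocks'' you mention, is what is needed to make the short symplectic-side proof airtight.
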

\begin{proof}
Let $\{u_j, v_j \in \mathbb{R}^{2n}: j= 1, \ldots, m\}$ be a symplectically orthonormal set 
consisting of $m$ symplectic eigenvector pairs of $A.$ 
By $\text{Proposition } \ref{2prop2}$ we can extend the above set to a symplectically orthogonal set
$$\{u_j, v_j \in \mathbb{R}^{2n}: j= 1, \ldots, m\} \cup \{\tilde{u}_j, \tilde{v}_j \in \mathbb{R}^{2n}: j= m+1, \ldots, n\}$$
consisting of $n$ pairs of symplectic eigenvectors of $A.$
Let $\tilde{d}_{j} $ be the symplectic eigenvalue of $A$ corresponding to 
the symplectic eigenvector pair $(\tilde{u}_j, \tilde{v}_j)$ for $j=m+1, \ldots, n.$ 
Therefore we have $$\langle \tilde{u}_j, J \tilde{v}_j \rangle =\dfrac{1}{\tilde{d}_j} \langle \tilde{u}_j, A \tilde{u}_j \rangle > 0.$$
Define $$ u_j = \langle \tilde{u}_j, J \tilde{v}_j \rangle^{-1/2} \tilde{u}_j, $$  
$$v_j = \langle \tilde{u}_j, J \tilde{v}_j \rangle^{-1/2} \tilde{v}_j$$ for all $j = m+1, \ldots, n.$
This implies $\langle u_j, Jv_j \rangle = 1$ for all $j= m+1, \ldots, n.$
The set $\{u_j, v_j \in \mathbb{R}^{2n}: j= 1, \ldots, n\}$ is the desired symplectic basis 
of $\mathbb{R}^{2n}$  consisting of symplectic eigenvector pairs of $A$
which is an extension of the given symplectically orthonormal set.
\end{proof}

\section{Fenchel subdifferential and directional derivatives} \label{2fensub}
In this section we show that the directional derivative of $d_m$ exists, and derive its expression.
For every positive integer $m \leq n$ define a map $\sigma_m: \mathbb{P}(2n) \to \mathbb{R}$ by 
$$\sigma_m(P)= -2 \sum_{j=1}^{m} d_j(P)$$ for all $P$ in $ \mathbb{P}(2n).$ 
By $\text{Theorem }5$ of \cite{bj}  we have
$$\sigma_m(P)=  \max\left\lbrace -\tr S^TPS : S \in Sp(2n,2m) \right\rbrace. $$
Therefore $\sigma_m$ is a convex function on $\mathbb{P}(2n).$
Extend the function $\sigma_m$ to the whole space $\mathbb{S}(2n)$ by setting $\sigma_m(P)= \infty$ if $P$ is not  in $\mathbb{P}(2n).$
Thus $\sigma_m: \mathbb{S}(2n) \rightarrow (-\infty, \infty]$ is a convex function.
For any subset $\Omega$ of symmetric matrices, its closed convex hull will be denoted by $\conv \ \Omega.$ 
\begin{prop} \label{2prop3}
Let $A$ be an element of $\mathbb{P}(2n)$ and $M$ be an element of $Sp(2n, 2n, A).$
The Fenchel subdifferential of $\sigma_m$ at $A$  is given by 
\begin{equation*}
\partial \sigma_m (A)=  \conv \left\lbrace  -SS^T:  S \in Sp(2n, 2m, A) \right\rbrace.
\end{equation*}
\end{prop}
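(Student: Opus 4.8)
The plan is to recognise $\sigma_m$ as a supremum of affine functions and to compute $\partial\sigma_m(A)$ by the classical max-formula, proving the two inclusions separately. Writing $g_S(P)=-\tr S^TPS=\langle P,-SS^T\rangle$ for $S\in Sp(2n,2m)$, the representation $\sigma_m(P)=\max\{g_S(P):S\in Sp(2n,2m)\}$ exhibits $\sigma_m$ as the support function of $\mathcal{G}=\{-SS^T:S\in Sp(2n,2m)\}$, and each $g_S$ is affine with constant gradient $-SS^T$. The subdifferential should then be the convex hull of the gradients of those affine pieces that are active at $A$, namely the $S$ attaining $g_S(A)=\sigma_m(A)$.

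The inclusion $\supseteq$ is the direct one. For $S\in Sp(2n,2m,A)$ I would first record that $g_S(A)=\sigma_m(A)$: using $(\ref{2eq01})$ together with the normalisation $\langle u_j,Jv_j\rangle=1$ coming from $(\ref{2eq03})$, one computes $\langle u_j,Au_j\rangle=\langle v_j,Av_j\rangle=d_j(A)$, whence $\tr S^TAS=2\sum_{j=1}^m d_j(A)$ and $g_S(A)=\sigma_m(A)$. Since $g_S(H)\le\sigma_m(H)$ for every $H$ (with $\sigma_m(H)=\infty$ off $\mathbb{P}(2n)$), this gives $\langle -SS^T,H-A\rangle=g_S(H)-g_S(A)\le\sigma_m(H)-\sigma_m(A)$ for all $H$, so $-SS^T\in\partial\sigma_m(A)$ by $(\ref{2eqn25})$. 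As $\partial\sigma_m(A)$ is convex and closed, it contains $\conv\{-SS^T:S\in Sp(2n,2m,A)\}$.

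For the reverse inclusion I would pass through the directional derivative. If $X\in\partial\sigma_m(A)$, then taking $H=A+tH_0$ in $(\ref{2eqn25})$, dividing by $t>0$ and letting $t\to 0^+$ yields $\langle X,H_0\rangle\le\sigma_m'(A;H_0)$ for all $H_0$; since $\conv\{-SS^T:S\in Sp(2n,2m,A)\}$ is compact and convex, it suffices by the separation theorem to show that its support function dominates $\sigma_m'(A;\cdot)$, i.e.
$$\sigma_m'(A;H)\le\max\{\langle -SS^T,H\rangle:S\in Sp(2n,2m,A)\}.$$
This is a Danskin-type computation of the directional derivative of a maximum, and one point needs care because the index set $Sp(2n,2m)$ is noncompact. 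I would localise: for $P$ in a small neighbourhood of $A$ the near-minimisers of $\tr S^TPS$ have uniformly bounded columns, since each summand $\langle c,Pc\rangle\ge\lambda_{\min}(P)\|c\|^2\ge 0$ is controlled by the (locally bounded) optimal value while $\lambda_{\min}(P)$ stays bounded below. Hence the maximum may be taken over a fixed compact subset of $Sp(2n,2m)$, after which the standard max-formula expresses $\sigma_m'(A;H)$ as the maximum of $\langle -SS^T,H\rangle$ over the active set $\{S\in Sp(2n,2m):g_S(A)=\sigma_m(A)\}$.

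The main obstacle is then identifying this active set with $Sp(2n,2m,A)$. The inclusion $Sp(2n,2m,A)\subseteq(\text{active set})$ is exactly the computation of the second paragraph; the content is the converse, that every $S\in Sp(2n,2m)$ minimising $\tr S^TAS$ consists of genuine symplectic eigenvector pairs for $d_1(A),\ldots,d_m(A)$. I would prove this by diagonalising: fixing the $M\in Sp(2n,2n,A)$ from the statement and passing to the symplectic eigenbasis coordinates it provides, so that $A$ takes the Williamson form $(\ref{2eqn31})$, the minimisation becomes an explicit block problem whose equality case forces the columns of $S$ into the symplectic eigenspaces of the $m$ smallest symplectic eigenvalues; Proposition $\ref{2prop2}$ and Corollary $\ref{2cor2}$ then let me complete such a configuration and confirm $S\in Sp(2n,2m,A)$. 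Once the active set is identified, compactness of $Sp(2n,2m,A)$ (its columns are bounded because $\langle c,Ac\rangle$ is controlled by $d_m(A)$ with $A\succ 0$, and the defining equations are closed) guarantees the maximum is attained, and the two inclusions combine to give the claimed formula.
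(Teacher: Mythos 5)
Your first inclusion is correct and is exactly the paper's argument: for $S \in Sp(2n,2m,A)$ the conditions $(\ref{2eq01})$--$(\ref{2eq03})$ give $\tr S^TAS = 2\sum_{j=1}^m d_j(A)$, so $-SS^T$ is a Fenchel subgradient, and closedness plus convexity of $\partial\sigma_m(A)$ finishes it. The reverse inclusion, however, has a genuine gap, and it sits precisely where you put the "main obstacle": the identification of the active set with $Sp(2n,2m,A)$ is \emph{false}. Minimizers of $\tr S^TAS$ over $Sp(2n,2m)$ need not consist of symplectic eigenvector pairs at all. Concretely, take any $S' \in Sp(2n,2m,A)$ and any $2m\times 2m$ orthosymplectic matrix $Q$; then $S = S'Q \in Sp(2n,2m)$ and, since $S'^TAS' = \diag(d_1(A),\ldots,d_m(A),d_1(A),\ldots,d_m(A))$ and $Q$ is orthogonal,
\begin{equation*}
\tr S^TAS = \tr Q^T\bigl(S'^TAS'\bigr)Q = 2\sum_{j=1}^m d_j(A),
\end{equation*}
so $S$ is active. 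But if $Q$ mixes two pairs belonging to distinct symplectic eigenvalues (say a planar rotation by $\theta \in (0,\pi/2)$ between the first and second pairs, extended orthosymplectically), the columns of $S$ are nontrivial combinations of eigenvectors for different eigenvalues, hence are not eigenvectors of $A$, and $S \notin Sp(2n,2m,A)$. So whenever at least two of $d_1(A),\ldots,d_m(A)$ are distinct, your claimed equality-case statement fails, and the diagonalization-plus-Corollary~\ref{2cor2} step you sketch cannot "confirm $S \in Sp(2n,2m,A)$."

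What actually saves the strategy is that such minimizers satisfy $SS^T = S'QQ^TS'^T = S'S'^T$, so the image of the active set under $S \mapsto -SS^T$ can still land inside $\conv\{-S'S'^T : S' \in Sp(2n,2m,A)\}$. But to use your Danskin formula you would have to prove that \emph{every} active $S$ is of the form $S'Q$ with $S' \in Sp(2n,2m,A)$ and $Q$ orthosymplectic (equivalently, $SS^T = S'S'^T$); this is the equality case of the extremal characterisation \cite[Theorem 5]{bj}, which is the real content of the proposition and is neither stated correctly nor proved in your proposal. (Your localisation argument making the index set compact so that Danskin applies is fine; that is not where the difficulty lies.) The paper circumvents this equality-case analysis entirely: assuming $B \in \partial\sigma_m(A)\setminus\conv\{-SS^T\}$, it separates $B$ from the convex hull, then uses the perturbation theorem \cite[Theorem 4.7]{jm} to produce, along the line $A + tC_0$ in the separating direction, a continuous curve $S(t) \in Sp(2n,2m,A+tC_0)$ of genuine eigenvector-pair matrices; the subgradient inequality and passing to the limit $t \to 0^+$ contradict the separation at $S(0) \in Sp(2n,2m,A)$. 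If you want to keep your route, you must either prove the orthosymplectic-orbit characterisation of the equality case or replace the active-set identification by a limiting argument of this kind.
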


\begin{proof}
Let $\mathcal{Q}=\conv \left\lbrace  -SS^T:  S \in Sp(2n, 2m, A) \right\rbrace $. 
For any $S \in Sp(2n, 2m, A)$ and $B \in \mathbb{S}(2n)$ we have
\begin{align*}
\langle -SS^T, B-A \rangle 	 
& = - \tr SS^TB +  \tr SS^TA \\
&= - \tr S^TBS + \tr S^TAS \\
&= - \tr S^TBS - \sigma_m(A) \\
& \leq \sigma_m(B)- \sigma_m(A).
\end{align*} 
The last equality follows from the fact that $S \in Sp(2n, 2m, A)$ and the last inequality follows by the definition by $\sigma_m.$
This implies that $-SS^T \in \partial \sigma_m(A).$ 
We know that $\partial \sigma_m(A)$ is a closed convex set.
Thus we have $\mathcal{Q} \subseteq \partial \sigma_m(A).$

For the other side inclusion, we  assume  $\partial \sigma_m(A) \backslash \mathcal{Q} \neq \emptyset$ and derive a contradiction.
Let $B \in \partial \sigma_m(A) \backslash \mathcal{Q}.$ 
By $\text{Theorem } 1.1.5$ in \cite{zalinescu}  we get a $\delta > 0$ and  $C_0 \in \mathbb{S}(2n)$ such that for all $S \in Sp(2n, 2m, A),$  
\begin{equation}
\langle B, C_0 \rangle \geq \langle -SS^T, C_0 \rangle + \delta. \label{2eq2}
\end{equation}
Let $(a,b)$ be an open interval containing $0$ such that $A(t) = A + tC_0$ is in $\mathbb{P}(2n)$ for all $t \in (a,b).$ 
By $\text{Theorem 4.7}$ of \cite{jm} we get an $\varepsilon > 0$ and continuous maps $d_j, u_j, v_j$ on $[0, \varepsilon) \subset (a,b)$ for $j=1,2, \ldots, n$ 
such that $d_j(t)= d_j(A(t))$ and  $\{u_j(t), v_j(t): j=1, \ldots, n \}$ is a symplectic basis of $\mathbb{R}^{2n}$ 
consisting of symplectic eigenvector pairs of $A(t)$  for all $t \in [0, \varepsilon).$ 
Therefore the matrix $$S(t) = [u_1(t), u_2(t), \ldots, u_n(t), v_1(t), v_2(t), \ldots, v_n(t)]$$
 is an element of $ Sp(2n, 2n, A(t))$ for all $t \in [0, \varepsilon).$
For any $t$ in $ (0,\varepsilon)$ we have
\begin{align*}
 \langle -S(t) S(t)^T, C_0 \rangle &=- \tr S(t)^T C_0S(t) \\
&= \dfrac{- \tr S(t)^T (A+t C_0) S(t)+ \tr S(t)^TAS(t)}{t}\\
&=  \dfrac{- \tr S(t)^T A(t) S(t)+ \tr S(t)^TAS(t)}{t}\\
&= \dfrac{\sigma_m(A(t))+   \tr S(t)^TAS(t)}{t} \\
& \geq \dfrac{\sigma_m(A(t))- \sigma_m(A)}{t}  \\
& \geq \langle C_0, B \rangle.
\end{align*}
The second last inequality follows because $S(t)$ is an element of $ Sp(2n,2m)$ for all $t \in (0, \varepsilon),$ and the last inequality follows from the fact that $B \in \partial \sigma_m(A).$
By continuity we get
\begin{align*}
 \langle -S(0) S(0)^T, C_0  \rangle  \geq \langle B, C_0 \rangle.
\end{align*}
 But $S(0) \in Sp(2n, 2m, A)$ and hence we get a contradiction by (\ref{2eq2}). 
Therefore our assumption  $\partial \sigma_m(A) \backslash \mathcal{Q} \neq \emptyset$ is wrong.
This completes the proof.
\end{proof}

We will now provide a more transparent expression of the Fenchel subdifferential of $\sigma_m.$
A symplectic eigenvalue $d$ of $A$ has {\it multiplicity $m$} if
the set $\{i:d_i(A)=d\}$ has exactly $m$ elements.
For $A \in \mathbb{P}(2n),$ let us define non-negative integers $i_m, j_m, r_m$ as follows.
Let $r_m=i_m+j_m$ be the multiplicity of $d_m(A)$ and $i_m \geq 1.$ Further, 
\begin{align*}
  d_{m-i_m}(A)  < d_{m-i_m+1}(A)  =  \ldots  =  d_{m+j_m}(A)  < d_{m+j_m+1}(A).
\end{align*}
In particular, $i_1=1, j_1=r_1-1$ and $i_n=r_n, j_n=0.$
Define $\Delta_m(A)$ to be the set of $2n \times 2m$ real matrices of the form 

\begin{equation} 
\begin{pmatrix}
  \begin{matrix}
   &I \ & 0 \\
   &0 \ & U  \\
   &0 \ & 0
  \end{matrix}
  & \vline & \begin{matrix}
  &0 & \ 0  \\
  &0 & \ V   \\
  &0 &  \ 0 
  \end{matrix} \\
\hline
   \begin{matrix}
  \ &0 & 0 \\
  \ &0 & -V \\
  \ &0 & 0
  \end{matrix} 
  & \vline &   \begin{matrix}
 & I \ & 0 \\
  & 0 \ & U \\
  & 0 \ & 0
  \end{matrix}
\end{pmatrix},
\end{equation}
where $I$ is the $(m-i_m) \times (m-i_m)$ identity matrix, and
  $U,V$ are $r_m \times i_m$ real matrices such that the columns of $U+\iota V$  are orthonormal.

\begin{thm} \label{2mainthm1}
Let $A$ be an element of $\mathbb{P}(2n)$ and $M$ be an element of $Sp(2n, 2n, A).$
The Fenchel subdifferential of $\sigma_m$ at $A$ is given by 
\begin{equation*}
\partial \sigma_m (A)=  \conv \left\lbrace  -M HH^TM^T:  H \in \Delta_m(A) \right\rbrace .
\end{equation*}
\end{thm}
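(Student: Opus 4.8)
The plan is to deduce the theorem from Proposition \ref{2prop3}, which already expresses $\partial\sigma_m(A)$ as $\conv\{-SS^T:S\in Sp(2n,2m,A)\}$. Since a closed convex hull depends only on its generating set, it suffices to prove the set identity
$$\{SS^T:S\in Sp(2n,2m,A)\}=\{MHH^TM^T:H\in\Delta_m(A)\}.$$
Throughout I would fix $M=[e_1,\dots,e_n,f_1,\dots,f_n]\in Sp(2n,2n,A)$, whose columns satisfy $(\ref{2eq01})$--$(\ref{2eq03})$ with $d_j=d_j(A)$, and write $E=[e_{m-i_m+1},\dots,e_{m+j_m}]$ and $F=[f_{m-i_m+1},\dots,f_{m+j_m}]$ for the $2n\times r_m$ blocks of columns of $M$ attached to the symplectic eigenvalue $d_m(A)$.

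For the inclusion $\{MHH^TM^T:H\in\Delta_m(A)\}\subseteq\{SS^T:S\in Sp(2n,2m,A)\}$ I would check directly that $MH\in Sp(2n,2m,A)$ for each $H\in\Delta_m(A)$. Reading off the block form of $H$, the columns of $S:=MH$ are $e_1,\dots,e_{m-i_m},\,EU-FV$ in the $u$-slots and $f_1,\dots,f_{m-i_m},\,EV+FU$ in the $v$-slots. From $AE=d_m(A)JF$ and $AF=-d_m(A)JE$ one gets $A(EU-FV)=d_m(A)J(EV+FU)$ and $A(EV+FU)=-d_m(A)J(EU-FV)$, so each selected pair satisfies $(\ref{2eq01})$ for the value $d_m(A)$; and the relations $(\ref{2eq02})$--$(\ref{2eq03})$ for $M$ reduce the symplectic orthonormality of the selected pairs precisely to the condition that the columns of $U+\iota V$ be orthonormal, which is built into the definition of $\Delta_m(A)$. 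Hence $MHH^TM^T=SS^T$ lies in the right family.

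The reverse inclusion is where the real work lies, and I expect it to be the main obstacle. Given $S=[u_1,\dots,u_m,v_1,\dots,v_m]\in Sp(2n,2m,A)$, I split its columns according to the value of the associated symplectic eigenvalue. Because the $d_j(A)$ are nondecreasing, the indices $1,\dots,m-i_m$ are exactly those with $d_j(A)<d_m(A)$, and for every eigenvalue strictly below $d_m(A)$ the matrix $S$ carries its complete symplectic eigenspace. The key observation is that $PP^T:=\sum_{d_j(A)<d_m(A)}(u_ju_j^T+v_jv_j^T)$ is independent of the chosen symplectically orthonormal eigenbasis: restricted to a single eigenspace, two such eigenbases are related by an orthosymplectic change of basis, in particular by an orthogonal matrix $W$ with $WW^T=I$, so $PP^T$ is unchanged and equals the corresponding sum formed from the columns of $M$, namely the identity blocks of $H$. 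For the last $i_m$ pairs, attached to $d_m(A)$, I would pass to $M$-coordinates to reduce $A$ on the eigenspace of $d_m(A)$ to the scalar $d_m(A)I$; there $(\ref{2eq01})$ forces $v_j=-Ju_j$, while $(\ref{2eq02})$--$(\ref{2eq03})$ say exactly that $u_1,\dots,u_{i_m}$ form a $J$-isotropic orthonormal system, i.e.\ an orthonormal frame in $\mathbb{C}^{r_m}$ once $J$ is identified with multiplication by $\iota$. Encoding this frame as the orthonormal columns of a matrix $U+\iota V$ produces $H\in\Delta_m(A)$ with $SS^T=MHH^TM^T$. The delicate points are precisely this orthosymplectic invariance of the lower block and pinning down the complex identification, including the signs, so that the resulting $(U,V)$ sits in $\Delta_m(A)$ exactly as written; once these are in place, combining the two inclusions with Proposition \ref{2prop3} yields the theorem.
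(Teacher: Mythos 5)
Your proposal is correct, and its skeleton matches the paper's: reduce via Proposition \ref{2prop3} to the set identity, obtain the easy inclusion by checking directly that $MH \in Sp(2n,2m,A)$ for each $H\in\Delta_m(A)$, and for the hard inclusion split $S$ into the columns attached to eigenvalues below $d_m(A)$ and those attached to $d_m(A)$. The difference is in how the hard inclusion is executed. The paper extends the $d_m(A)$-block of $S$ to a full symplectically orthonormal eigenbasis of that eigenspace (Corollary \ref{2cor2}) and then quotes Corollary 5.3 of \cite{jm} twice: the orthogonality of the resulting factor $Q$ collapses the lower block via $\overline{S}\,\overline{S}^T=\overline{M}QQ^T\overline{M}^T=\overline{M}\,\overline{M}^T$, while the structure theorem for full orthosymplectic matrices, $R=\begin{psmallmatrix} X & Y\\ -Y& X\end{psmallmatrix}$ with $X+\iota Y$ unitary, is truncated to its first $i_m$ column pairs to produce $H$. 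You instead prove both ingredients inline: the invariance of the lower block follows eigenvalue-by-eigenvalue because a change of basis preserving both Gram matrices (the $J$-Gram equal to $J$ and the $A$-Gram equal to $dI$) is simultaneously symplectic and orthogonal, and the $d_m(A)$-block is identified with a complex orthonormal frame directly in $\widetilde{M}$-coordinates, with no extension step and no structure theorem. The delicate points you flag do check out: in coordinates, (\ref{2eq01}) forces $b_j=-J_{2r_m}a_j$, and $J$-isotropic orthonormality of the $a_j$ is precisely orthonormality of the columns of $U+\iota V$, so there is no gap. Your route buys self-containedness (no appeal to \cite{jm} or Corollary \ref{2cor2}); the paper's buys brevity, since the unitary structure and the sign conventions are inherited from the quoted results rather than set up by hand.
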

\begin{proof}
We first show that $$\partial \sigma_m (A) \subseteq  \conv \left\lbrace  -M HH^TM^T:  H \in \Delta_m(A) \right\rbrace.$$
By $\text{Proposition } \ref{2prop3}$ it suffices to show that for every $S \in Sp(2n, 2m, A)$ there exists some $H \in \Delta_m(A)$ such that  $SS^T = MHH^TM^T.$ 
Let $I$ denote the $2n \times 2n$ identity matrix and $I=\overline{I} \diamond \widetilde{I} \diamond \widehat{I}$ be the symplectic column partition of $I$ 
of order $(m-i_m, r_m, n-m-j_m).$
Let $\overline{M}=M\overline{I},$  $\widetilde{M}=M \widetilde{I}$ and 
  $\widehat{M}=M \widehat{I}.$
The columns of $\widetilde{M}$ consist of symplectic eigenvector pairs of $A$ corresponding to the symplectic eigenvalue $d_m(A).$ 
Let $S \in Sp(2n, 2m, A)$ be arbitrary and $S=\overline{S} \diamond \widetilde{S}_1$ be the symplectic column partition of $S$ of order $(m-i_m, i_m).$
 Extend $S$ to a matrix $S \diamond \widetilde{S}_2$ in $ Sp(2n, 2(m+j_m), A)$ by $\text{Corollary } \ref{2cor2}.$
The columns of $\overline{S}$ consist of symplectic eigenvector pairs of $A$ corresponding to $d_1(A), \ldots, d_{m-i_m}(A),$ 
and the columns of $\widetilde{S}_1 \diamond \widetilde{S}_2$ consist of symplectic eigenvector pairs of $A$ corresponding to $d_m(A).$
 By $\text{Corollary }5.3$ of \cite{jm} we can find orthosymplectic matrices $Q$ and $R$ of orders $2(m-i_m) \times 2(m-i_m)$ and $2r_m \times 2r_m$ 
 respectively such that $\overline{S}= \overline{M}Q$ and $\widetilde{S}_1 \diamond \widetilde{S}_2= \widetilde{M}R.$
Let $R = \overline{R} \diamond \widetilde{R}$ be the symplectic column partition of $R$ of order $(i_m,j_m).$
By $\text{Proposition } \ref{2prop4}$ we have $\widetilde{S}_1 \diamond \widetilde{S}_2= \widetilde{M}\overline{R} \diamond  \widetilde{M} \widetilde{R}.$
 This implies $\widetilde{S}_1= \widetilde{M} \overline{R}.$ 
Therefore
\begin{equation*} 
S = \overline{S} \diamond \widetilde{S}_1 = \overline{M}Q \diamond \widetilde{M} \overline{R}.
\end{equation*}

So we have 
\begin{equation*}
S= M (\overline{I} Q \diamond \widetilde{I} \overline{R} ).
\end{equation*}
There exist \cite{dms} $r_m \times r_m$ real matrices $X,Y$ such that $X+\iota Y$ is unitary  and 
\[ R= \begin{pmatrix}
X & Y \\ -Y & X
\end{pmatrix}.\]
Let $U,V$ be the $r_m \times i_m$ matrices consisting of the first $i_m$ columns of $X,Y$ respectively.
Therefore 
\begin{equation} \label{2eqn4}
\overline{R} = \begin{pmatrix} 
U & V \\
-V & U
\end{pmatrix}. 
\end{equation}
We have 
\begin{align*}
SS^T &= M (\overline{I} Q \diamond \widetilde{I} \overline{R} )(\overline{I} Q \diamond \widetilde{I} \overline{R} )^TM^T \\
&= M \left( (\overline{I} Q)(\overline{I} Q)^T+  (\widetilde{I} \overline{R})(\widetilde{I} \overline{R})^T  \right)M^T\\
&= M \left( \overline{I} QQ^T \overline{I}^T+ (\widetilde{I} \overline{R})(\widetilde{I} \overline{R})^T \right)M^T \\
&= M \left( \overline{I}  \overline{I}^T+ (\widetilde{I} \overline{R})(\widetilde{I} \overline{R})^T \right)M^T \\
&= M (\overline{I}  \diamond \widetilde{I} \overline{R} )(\overline{I}  \diamond \widetilde{I} \overline{R} )^TM^T.
\end{align*}
The second and the last equalities follow from $\text{Proposition } \ref{2prop4}.$
The fourth equality follows from the fact that $Q$ is an orthogonal matrix.
Let $H=\overline{I}  \diamond \widetilde{I} \overline{R}.$
By the definition of $\Delta_m(A)$ and $(\ref{2eqn4})$ we have $H \in \Delta_m(A).$
Therefore $SS^T=MHH^TM^T,$ where $H \in \Delta_m(A).$

Now we prove the reverse inclusion.
By definition, observe that any $H \in \Delta_m(A)$ is of the form
$$ H = \overline{I} \diamond \widetilde{I} \begin{pmatrix} 
U & V \\
-V & U
\end{pmatrix}. $$
By $\text{Proposition }\ref{2prop4}$ we thus have
$$ MH = \overline{M} \diamond \widetilde{M} \begin{pmatrix} 
U & V \\
-V & U
\end{pmatrix}.$$
We know that the columns of $\overline{M}$ correspond to the
symplectic eigenvalues $d_1(A), \ldots, d_{m-i_m}(A).$
By using the fact that the columns of $\widetilde{M}$ correspond to the 
symplectic eigenvalue $d_m(A)$ we get
\begin{align*}
\begin{pmatrix} U & V \\ -V & U \end{pmatrix}^T \widetilde{M}^T A \widetilde{M} \begin{pmatrix} U & V \\ -V & U \end{pmatrix} 
&= d_m(A) \begin{pmatrix} U & V \\ -V & U \end{pmatrix}^T \begin{pmatrix} U & V \\ -V & U \end{pmatrix} \\
&= d_m(A) I_{2i_m},
\end{align*}
where $I_{2i_m}$ is the $2i_m \times 2i_m$ identity matrix.
Here we used the fact that the columns of 
$\begin{psmallmatrix} U & V \\ -V & U \end{psmallmatrix}$ are orthonormal.
The above relation implies that the columns of $\widetilde{M}\begin{psmallmatrix} U & V \\ -V & U \end{psmallmatrix}$
also correspond to the symplectic eigenvalue $d_m(A).$
Therefore we have $MH \in Sp(2n, 2m, A)$ for all $H \in \Delta_m(A),$
and hence 
$$\partial \sigma_m (A) \supseteq  \conv \left\lbrace  -M HH^TM^T:  H \in \Delta_m(A) \right\rbrace.$$
This completes the proof.
\end{proof}

In the next theorem we derive the directional derivative of $\sigma_m$ using the convexity and the Fenchel subdifferential of $\sigma_m.$
\begin{thm} \label{2thm4}
Let $A$ be an element of  $\mathbb{P}(2n)$ and $M$ be  an element of  $Sp(2n, 2n, A).$
Let $I$ denote the $2n \times 2n$ identity matrix and $I=\overline{I} \diamond \widetilde{I} \diamond \widehat{I}$ 
be the symplectic column partition of $I$ of order $(m-i_m, r_m, n-m-j_m).$ 
Let $\overline{M}=M\overline{I},$  $\widetilde{M}=M \widetilde{I}$ and 
  $\widehat{M}=M \widehat{I}.$
Define  $\overline{B}=-\overline{M}^TB \overline{M}$ and $\widetilde{B}=-\widetilde{M}^TB \widetilde{M}$ for every $B$ in $\mathbb{S}(2n).$
Let us consider the block matrix form of $\widetilde{B},$
 \[\widetilde{B}=\begin{pmatrix}
 \tilde{B}_{11} & \tilde{B}_{12} \\
 \tilde{B}_{12}^T & \tilde{B}_{22}
 \end{pmatrix}, \]
 where each block has order $r_m \times r_m.$
Denote by $\widetilde{\widetilde{B}}$ the Hermitian matrix $\tilde{B}_{11} + \tilde{B}_{22} + \iota (\tilde{B}_{12} - \tilde{B}_{12}^{T} ).$
The directional derivative of $\sigma_m$ at $A$ is given by
\begin{align*}
\sigma_m^{\prime}(A;B) &=  \tr \overline{B} + \sum_{j=1}^{i_m} \lambda_{j}^{\downarrow}(\widetilde{\widetilde{B}}) 
\end{align*}
for all $B \in \mathbb{S}(2n).$
Here $ \lambda_{j}^{\downarrow}(\widetilde{\widetilde{B}})$ denotes the $j\text{th}$ largest eigenvalue of the Hermitian matrix $\widetilde{\widetilde{B}}.$
\end{thm}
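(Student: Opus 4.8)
The plan is to combine the convexity of $\sigma_m$ with the description of its Fenchel subdifferential from Theorem \ref{2mainthm1}. Since $\mathbb{P}(2n)$ is open and $\sigma_m$ is finite and convex on it, $\sigma_m$ is locally Lipschitz near $A,$ so its directional derivative exists in every direction and equals the support function of the subdifferential:
$$\sigma_m^{\prime}(A;B) = \max_{X \in \partial \sigma_m(A)} \langle X, B \rangle$$
(see \cite{penot, zalinescu}). A linear functional attains its supremum over the closed convex hull of a set at a point of that set, and the family $\{-MHH^TM^T : H \in \Delta_m(A)\}$ is compact, being the continuous image of the compact set of frames $\{U+\iota V : (U+\iota V)^*(U+\iota V)=I_{i_m}\}.$ Hence Theorem \ref{2mainthm1} reduces the problem to
$$\sigma_m^{\prime}(A;B) = \max_{H \in \Delta_m(A)} \langle -MHH^TM^T, B \rangle.$$

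Next I would evaluate this inner product. Writing $W=\begin{psmallmatrix} U & V \\ -V & U \end{psmallmatrix},$ every $H \in \Delta_m(A)$ has the form $H=\overline{I} \diamond \widetilde{I}W,$ so $MH=\overline{M} \diamond \widetilde{M}W$ by Proposition \ref{2prop4}. As the trace $\tr (MH)^T B (MH)$ is the sum of $c^T B c$ over the columns $c$ of $MH$ and is therefore insensitive to the column reordering encoded by $\diamond,$ it splits as $\tr \overline{M}^T B \overline{M} + \tr W^T \widetilde{M}^T B \widetilde{M} W.$ Using the definitions $\overline{B}=-\overline{M}^T B \overline{M}$ and $\widetilde{B}=-\widetilde{M}^T B \widetilde{M},$ this gives
$$\langle -MHH^TM^T, B \rangle = \tr \overline{B} + \tr W^T \widetilde{B} W,$$
and the first term is independent of $H,$ so only $\tr W^T \widetilde{B}W$ must be maximized.

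The heart of the argument, and the step I expect to be the main obstacle, is to recast the real quadratic form $\tr W^T \widetilde{B}W$ as a complex Hermitian form. Setting $Z=U+\iota V,$ so that the constraint defining $\Delta_m(A)$ reads $Z^*Z=I_{i_m},$ I would expand $W^T \widetilde{B}W$ in the $2 \times 2$ block structure of $\widetilde{B},$ using that $\widetilde{B}$ is symmetric (hence $\tilde{B}_{11},\tilde{B}_{22}$ are symmetric and the off-diagonal blocks are mutual transposes). Repeatedly applying $\tr X = \tr X^T$ to pair up cross terms, the diagonal contributions collapse to $\tr (\tilde{B}_{11}+\tilde{B}_{22})(UU^T+VV^T)$ and the off-diagonal ones to $2\tr \tilde{B}_{12}(UV^T-VU^T),$ which together are exactly $\tr Z^* \widetilde{\widetilde{B}}Z$ for the Hermitian matrix $\widetilde{\widetilde{B}}=\tilde{B}_{11}+\tilde{B}_{22}+\iota(\tilde{B}_{12}-\tilde{B}_{12}^T).$ The delicacy here is that the antisymmetric combination $\tilde{B}_{12}-\tilde{B}_{12}^T$ must emerge precisely from the sign pattern of $W,$ and one must confirm that the resulting form is real, which it is since $Z^*\widetilde{\widetilde{B}}Z$ is Hermitian.

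Finally, as $H$ ranges over $\Delta_m(A)$ the matrix $Z$ ranges over all $r_m \times i_m$ complex matrices with orthonormal columns, so Ky Fan's extremal characterization \cite[Theorem 1]{fan} of the sum of the largest eigenvalues of a Hermitian matrix gives
$$\max_{Z^*Z=I_{i_m}} \tr Z^* \widetilde{\widetilde{B}}Z = \sum_{j=1}^{i_m} \lambda_j^{\downarrow}(\widetilde{\widetilde{B}}).$$
Substituting this into the preceding displays produces $\sigma_m^{\prime}(A;B)=\tr \overline{B}+\sum_{j=1}^{i_m} \lambda_j^{\downarrow}(\widetilde{\widetilde{B}}),$ which is the asserted formula.
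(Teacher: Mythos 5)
Your proposal is correct and follows essentially the same route as the paper: the max formula for the convex function $\sigma_m,$ the subdifferential description from Theorem \ref{2mainthm1}, the block computation converting $\tr W^T\widetilde{B}W$ into the Hermitian form $\tr Z^{\ast}\widetilde{\widetilde{B}}Z,$ and Ky Fan's extremal characterization. Your only addition is the explicit compactness argument justifying that the maximum over the closed convex hull is attained on the generating set $\{-MHH^TM^T : H \in \Delta_m(A)\},$ a point the paper passes over silently.
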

\begin{proof}
By the {\it max formula} \cite[Theorem 3.1.8]{borwein} we have
$$\sigma_m^{\prime}(A; B) = \max   \{\langle C, B \rangle:  C \in \partial \sigma_m(A)\}$$
for all $B \in  \mathbb{S}(2n).$
By $\text{Theorem } \ref{2mainthm1}$ we have
\begin{equation}
\sigma_m^{\prime}(A; B) = \max   \{\langle -MHH^T M^T,B \rangle : H \in \Delta_{m}(A) \}. \label{2eqn7}
\end{equation}
Every element of $\Delta_m(A)$ is of the form $\overline{I} \diamond \widetilde{I} \overline{R}$ where $\overline{R}$ is given by $(\ref{2eqn4}).$
Let $H =  \overline{I} \diamond \widetilde{I} \overline{R}$ be an arbitrary element of $\Delta_m(A).$ 
This gives
\begin{align} \label{2eqn19}
MHH^T M^T &= (M ( \overline{I} \diamond \widetilde{I} \overline{R})) ( M ( \overline{I} \diamond \widetilde{I} \overline{R}))^T \nonumber \\
&=  (M\overline{I} \diamond M \widetilde{I} \overline{R}) ( M \overline{I} \diamond M \widetilde{I} \overline{R})^T \nonumber  \\
&= (\overline{M} \diamond  \widetilde{M} \overline{R} ) (  \overline{M} \diamond  \widetilde{M} \overline{R} )^T \nonumber \\
&= \overline{M} \overline{M}^{T} + \widetilde{M} \overline{R}  \overline{R}^{T} \widetilde{M}^{T}.
\end{align}
The second and the last equalities follow from $\text{Proposition } \ref{2prop4}.$
This implies
\begin{align} \label{2eqn8}
\langle - MHH^TM^T, B \rangle &= \tr(-MHH^T M^TB) \nonumber \\
&= \tr (-\overline{M} \overline{M}^{T} B) + \tr( -\widetilde{M} \overline{R}  \overline{R}^{T} \widetilde{M}^{T} B) \nonumber \\
&=  \tr (-\overline{M} \overline{M}^{T} B) + \tr( - \overline{R}^{T} \widetilde{M}^{T} B  \widetilde{M}\overline{R}) \nonumber \\
&= \tr (- \overline{M}^{T} B \overline{M}) + \tr(\overline{R}^{T} \widetilde{B}\overline{R}) \nonumber \\
&= \tr \overline{B} + \tr (U^T \tilde{B}_{11} U+ V^T \tilde{B}_{22} V-2U^T \tilde{B}_{12}V) \nonumber \\
& \ \ \ \ + \tr (V^T \tilde{B}_{11} V+ U^T \tilde{B}_{22} U+2U^T \tilde{B}_{12}^{T}V) \nonumber  \\
&=  \tr \overline{B}  + \tr (U+\iota V)^{\ast} (\tilde{B}_{11}+\tilde{B}_{22}+ \iota (\tilde{B}_{12}- \tilde{B}_{12}^{T}))(U+\iota V)   \nonumber \\
&= \tr \overline{B} + \tr (U+\iota V)^{\ast} \widetilde{\widetilde{B}} (U+\iota V).
\end{align}

Therefore by $(\ref{2eqn7})$ and $(\ref{2eqn8})$ we get
\begin{align*}
\sigma_m^{\prime}(A; B) &= \tr \overline{B} + \max_{U+ \iota V}  \tr (U+\iota V)^{\ast} \widetilde{\widetilde{B}} (U+\iota V),
\end{align*}
where the maximum is taken over $r_m \times i_m$ unitary matrices $U+ \iota V.$
By Ky Fan's extremal characterisation \cite[Theorem 1]{fan} we have
$$ \max_{U+ \iota V}  \tr (U+\iota V)^{\ast} \widetilde{\widetilde{B}} (U+\iota V) = \sum_{j=1}^{i_m}\lambda_{j}^{\downarrow}(\widetilde{\widetilde{B}}).$$
This completes the proof.
\end{proof}

\begin{definition}
Let $\mathcal{O}$ be an open subset of $\mathbb{S}(n).$ 
A function $f: \mathcal{O} \to \mathbb{R}$ is said to be G\^ ateaux differentiable
at $A \in \mathcal{O}$ if $f$ is directionally differentiable at $A$ and the directional derivative
is a linear map from $\mathbb{S}(n)$ to $\mathbb{R}.$
The linear map is denoted by $\nabla f(A)$ and called the gradient of $f$ at $A.$
\end{definition}

The following is an easy corollary of the above theorem.
\begin{cor} \label{2cor1}
Let $A$ be an element of $\mathbb{P}(2n)$ and $M$ be an element of $Sp(2n, 2n, A).$
If $d_{m}(A) < d_{m+1}(A)$ then $\sigma_m$ is G\^ ateaux differentiable at $A$ with the gradient 
\begin{align*}
\nabla \sigma_m(A)= -(\overline{M} \diamond \widetilde{M} )(\overline{M} \diamond \widetilde{M} )^T.
\end{align*}
Here we assume $d_{m+1}(A)=\infty$ for $m=n.$

\end{cor}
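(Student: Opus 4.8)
The plan is to specialize Theorem \ref{2thm4} to the case $d_m(A) < d_{m+1}(A)$ and to observe that under this hypothesis the Ky Fan sum appearing there collapses to a trace, which is linear in the direction $B$; linearity of the directional derivative is precisely G\^ateaux differentiability.

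First I would unwind what the hypothesis says about the integers $i_m, j_m, r_m.$ By their definition we have $d_{m-i_m}(A) < d_{m-i_m+1}(A) = \cdots = d_{m+j_m}(A) < d_{m+j_m+1}(A),$ so $d_{m+j_m}(A)$ is strictly smaller than $d_{m+j_m+1}(A).$ The strict gap $d_m(A) < d_{m+1}(A)$ therefore forces $j_m = 0,$ and hence $r_m = i_m + j_m = i_m.$ With $r_m = i_m$ the Hermitian matrix $\widetilde{\widetilde{B}}$ of Theorem \ref{2thm4} has size $r_m \times r_m = i_m \times i_m,$ so the partial sum of its $i_m$ largest eigenvalues is its full trace:
$$\sum_{j=1}^{i_m} \lambda_j^{\downarrow}(\widetilde{\widetilde{B}}) = \tr \widetilde{\widetilde{B}}.$$
I would then compute this trace. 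Since $\widetilde{\widetilde{B}} = \tilde{B}_{11} + \tilde{B}_{22} + \iota(\tilde{B}_{12} - \tilde{B}_{12}^T)$ and $\tr(\tilde{B}_{12} - \tilde{B}_{12}^T) = 0,$ the imaginary contribution drops out under the trace and $\tr \widetilde{\widetilde{B}} = \tr \tilde{B}_{11} + \tr \tilde{B}_{22} = \tr \widetilde{B}.$

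Substituting back into Theorem \ref{2thm4} gives $\sigma_m^{\prime}(A; B) = \tr \overline{B} + \tr \widetilde{B},$ which is manifestly linear in $B,$ so $\sigma_m$ is G\^ateaux differentiable at $A.$ To exhibit the gradient I would rewrite the two traces using the definitions $\overline{B} = -\overline{M}^T B \overline{M}$ and $\widetilde{B} = -\widetilde{M}^T B \widetilde{M}$ together with the cyclicity of the trace, obtaining
$$\tr \overline{B} + \tr \widetilde{B} = -\tr\left( (\overline{M}\,\overline{M}^T + \widetilde{M}\,\widetilde{M}^T) B \right).$$
Finally, exactly as in the block computation $(\ref{2eqn19}),$ the identity $(\overline{M} \diamond \widetilde{M})(\overline{M} \diamond \widetilde{M})^T = \overline{M}\,\overline{M}^T + \widetilde{M}\,\widetilde{M}^T$ lets me recognize this as $\langle -(\overline{M} \diamond \widetilde{M})(\overline{M} \diamond \widetilde{M})^T, B \rangle,$ identifying the gradient as $-(\overline{M} \diamond \widetilde{M})(\overline{M} \diamond \widetilde{M})^T.$

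The only point requiring genuine care is the first step: reading off from the definition that the strict gap $d_m(A) < d_{m+1}(A)$ is precisely the condition $j_m = 0$ (equivalently $r_m = i_m$), since this is what makes the eigenvalue sum a full trace; everything after is routine trace algebra, so there is no real obstacle. For the boundary case $m = n$ the convention $d_{n+1}(A) = \infty$ makes the gap hypothesis automatic, and since $i_n = r_n$ and $j_n = 0$ hold already, the same argument applies verbatim.
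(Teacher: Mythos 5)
Your proof is correct, and it reaches the conclusion by a genuinely different (though closely related) route from the paper's. You specialize the \emph{statement} of Theorem \ref{2thm4}: since $d_m(A)<d_{m+1}(A)$ forces $j_m=0$ and $i_m=r_m$, the Ky Fan sum $\sum_{j=1}^{i_m}\lambda_j^{\downarrow}(\widetilde{\widetilde{B}})$ runs over \emph{all} eigenvalues of the $r_m\times r_m$ matrix $\widetilde{\widetilde{B}}$ and hence equals $\tr \widetilde{\widetilde{B}}=\tr\widetilde{B}$, after which linearity in $B$ is manifest and the gradient drops out from cyclicity of the trace. The paper instead reaches back into the \emph{proof} of Theorem \ref{2thm4} and into Theorem \ref{2mainthm1}: when $i_m=r_m$ the block $\overline{R}$ is a full $2r_m\times 2r_m$ orthosymplectic matrix, so $\overline{R}\,\overline{R}^T=I$ and by $(\ref{2eqn19})$ every generator $-MHH^TM^T$, $H\in\Delta_m(A)$, is the single matrix $-\overline{M}\,\overline{M}^T-\widetilde{M}\widetilde{M}^T$; thus the Fenchel subdifferential $\partial\sigma_m(A)$ is a singleton, and the max formula makes $\sigma_m^{\prime}(A;\cdot)$ the corresponding linear functional. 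Both arguments end with the same appeal to Proposition \ref{2prop4} to put the gradient in the form $-(\overline{M}\diamond\widetilde{M})(\overline{M}\diamond\widetilde{M})^T$. Your version is more self-contained, using only the displayed formula of Theorem \ref{2thm4} plus elementary trace identities; the paper's version buys the stronger structural fact that the entire subdifferential collapses to a point, which for a convex function is exactly equivalent to G\^ateaux differentiability and hands over the gradient with no further computation.
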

\begin{proof}
If $d_m(A) < d_{m+1}(A)$  then $j_m=0$ and $i_m=r_m.$ 
Therefore  $\overline{R}$ is a $2r_m \times 2r_m$ orthosymplectic  matrix in the proof of $\text{Theorem } \ref{2thm4}.$
By $(\ref{2eqn19})$ we have
 $$MHH^TM^T = \overline{M} \overline{M}^T+ \widetilde{M} \widetilde{M}^T$$
 for all $H \in \Delta_m(A).$
 By $\text{Theorem } \ref{2mainthm1}$ we get 
 $$\sigma_m^{\prime}(A; B) = \langle -\overline{M} \overline{M}^T - \widetilde{M} \widetilde{M}^T, B \rangle.$$
By $\text{Proposition } \ref{2prop4}$ we have 
 $$-\overline{M} \overline{M}^T - \widetilde{M} \widetilde{M}^T=-(\overline{M} \diamond \widetilde{M} )(\overline{M} \diamond \widetilde{M} )^T.$$
Therefore $\sigma_m$ is G\^ ateaux differentiable with $\nabla \sigma_m(A)= -(\overline{M} \diamond \widetilde{M} )(\overline{M} \diamond \widetilde{M} )^T.$
\end{proof}

We have the relation $2d_m = \sigma_{m-1} - \sigma_{m}$ whenever $m \geq 2,$ and $2d_1= -\sigma_1.$
Denote by $\sigma_0: \mathbb{S}(2n) \to \mathbb{R}$ the zero map so that $2d_1 = \sigma_0 - \sigma_1.$ 
Therefore we have
 $$2d_m=\sigma_{m-1}-\sigma_{m},$$ 
 for all positive integers $m \leq n.$
By the definition of directional derivative we have
$$2 d_m^{\prime}(A; B)= \sigma_{m-1}^{\prime}(A;B)-\sigma_m^{\prime}(A;B)$$
for all $B \in  \mathbb{S}(2n).$
By this relation we know that $d_m$ is directionally differentiable and find the expression of its directional derivative. 
The following is the main theorem of this section.
\begin{thm} \label{2thm1}
Let $A$ be an element of  $\mathbb{P}(2n)$ and $M$ be  an element of  $Sp(2n, 2n, A).$
Let $I$ denote the $2n \times 2n$ identity matrix and $I=\overline{I} \diamond \widetilde{I} \diamond \widehat{I}$ 
be the symplectic column partition of $I$ of order $(m-i_m, r_m, n-m-j_m).$ 
Let $\overline{M}=M\overline{I},$  $\widetilde{M}=M \widetilde{I}$ and 
  $\widehat{M}=M \widehat{I}.$
Define  $\overline{B}=-\overline{M}^TB \overline{M}$ and $\widetilde{B}=-\widetilde{M}^TB \widetilde{M}$ for every $B$ in $\mathbb{S}(2n).$
Let us consider the block matrix form of $\widetilde{B},$
 \[\widetilde{B}=\begin{pmatrix}
 \tilde{B}_{11} & \tilde{B}_{12} \\
 \tilde{B}_{12}^T & \tilde{B}_{22}
 \end{pmatrix}, \]
 where each block has order $r_m \times r_m.$
Denote by $\widetilde{\widetilde{B}}$ the Hermitian matrix $\tilde{B}_{11} + \tilde{B}_{22} + \iota (\tilde{B}_{12} - \tilde{B}_{12}^{T} ).$
The directional derivative of $d_m$ at $A$ is given by
\begin{equation} \label{2eqn9}
 d_m^{\prime}(A;B) = -\frac{1}{2} \lambda_{i_m}^{\downarrow} (\widetilde{\widetilde{B}}),
\end{equation}
for all $B \in \mathbb{S}(2n).$ 
\end{thm}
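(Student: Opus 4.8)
The plan is to obtain $d_m'(A;B)$ directly from the relation $2d_m = \sigma_{m-1} - \sigma_m$ together with the directional-derivative formula of Theorem~\ref{2thm4}. Since directional derivatives are additive in this way, I would write
\begin{equation*}
2\,d_m'(A;B) = \sigma_{m-1}'(A;B) - \sigma_m'(A;B),
\end{equation*}
and then apply Theorem~\ref{2thm4} to each of the two terms. The subtlety is that the two maps $\sigma_{m-1}$ and $\sigma_m$ are built from \emph{different} index data: the integers $i_{m-1}, j_{m-1}, r_{m-1}$ attached to $d_{m-1}(A)$ need not equal those attached to $d_m(A)$. The key observation that makes everything collapse is that both $\sigma_{m-1}'$ and $\sigma_m'$ are expressed through the \emph{same} Hermitian matrix $\widetilde{\widetilde{B}}$ whenever $d_{m-1}(A) = d_m(A)$, because then $\widetilde{M}$, and hence the block data $\tilde B_{11}, \tilde B_{12}, \tilde B_{22}$ and the contraction $\widetilde{\widetilde{B}} = \tilde B_{11} + \tilde B_{22} + \iota(\tilde B_{12} - \tilde B_{12}^T)$, are identical for both maps.

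The main step is therefore a bookkeeping argument on eigenvalue indices. I would split into two cases according to whether $d_{m-1}(A) < d_m(A)$ or $d_{m-1}(A) = d_m(A)$. In the strict case, $i_m = 1$, so $d_m(A)$ sits at the bottom of its multiplicity block; the $\sigma_{m-1}$ term involves a \emph{different} (lower) eigenvalue block, and its contribution together with the $\tr\overline{B}$ terms should cancel, leaving only the single top eigenvalue $\lambda_1^\downarrow(\widetilde{\widetilde{B}})$ from $\sigma_m$ in the degenerate block of $d_m$. In the degenerate case $d_{m-1}(A) = d_m(A)$, the matrices $\widetilde M$ coincide for the two maps, the $\tr\overline{B}$ parts differ by exactly the one eigenvector pair that moves from the ``bar'' block into the degenerate block as the index drops from $m$ to $m-1$, and Theorem~\ref{2thm4} gives
\begin{equation*}
\sigma_{m-1}'(A;B) - \sigma_m'(A;B) = \sum_{j=1}^{i_m - 1}\lambda_j^\downarrow(\widetilde{\widetilde{B}}) - \sum_{j=1}^{i_m}\lambda_j^\downarrow(\widetilde{\widetilde{B}}) = -\lambda_{i_m}^\downarrow(\widetilde{\widetilde{B}}),
\end{equation*}
which is exactly twice the claimed expression. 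The telescoping of the two partial sums of eigenvalues of the common matrix $\widetilde{\widetilde{B}}$ is the heart of the computation.

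The hard part will be verifying cleanly that the $\tr\overline{B}$ contributions and the extra eigenvalue term from the \emph{lower} block align so that only $\lambda_{i_m}^\downarrow(\widetilde{\widetilde{B}})$ survives, and in particular handling the boundary indices ($m=1$, where $\sigma_0 \equiv 0$ and $i_1 = 1$, and $m = n$, where $j_n = 0$). For $m=1$ one simply has $2d_1' = -\sigma_1'$, and since $i_1 = 1$ the formula of Theorem~\ref{2thm4} gives $\sigma_1'(A;B) = \lambda_1^\downarrow(\widetilde{\widetilde{B}})$, directly yielding $d_1'(A;B) = -\tfrac12\lambda_1^\downarrow(\widetilde{\widetilde{B}})$; note that in this case $\overline{M}$ is empty, so $\tr\overline{B} = 0$ and no spurious term appears. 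I would therefore present the degenerate computation as the general argument, check that the strict case is the instance $i_m = 1$ of it (so that the surviving term is $\lambda_1^\downarrow$), and confirm that the empty-block conventions make the boundary cases automatic.
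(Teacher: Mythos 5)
Your route is the same as the paper's: write $2d_m=\sigma_{m-1}-\sigma_m$, apply Theorem \ref{2thm4} to both terms, and split into cases according to whether $d_{m-1}(A)=d_m(A)$. Your degenerate case is the paper's computation and is correct, except for one confused sentence: when $d_{m-1}(A)=d_m(A)$ one has $i_{m-1}=i_m-1$, $j_{m-1}=j_m+1$, $r_{m-1}=r_m$, hence $(m-1)-i_{m-1}=m-i_m$, so the bar and tilde blocks attached to the index $m-1$ are \emph{identical} to those attached to $m$; no eigenvector pair ``moves'' between blocks, and the $\tr \overline{B}$ terms cancel because they are literally equal --- which is what your displayed telescoping tacitly assumes.

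The genuine gap is the strict case $i_m=1$ with $m\geq 2$. There, Theorem \ref{2thm4} applied to $\sigma_{m-1}$ is expressed through the multiplicity block of $d_{m-1}(A)$, not that of $d_m(A)$: writing $\overline{M}^{\prime},\widetilde{M}^{\prime}$ for the bar and tilde blocks of $M$ at index $m-1$, and $\overline{B}^{\prime},\widetilde{\widetilde{B}}^{\prime}$ for the corresponding data, what must be proved is
\begin{equation*}
\tr \overline{B}^{\prime}+\sum_{j=1}^{i_{m-1}}\lambda_{j}^{\downarrow}\bigl(\widetilde{\widetilde{B}}^{\prime}\bigr)=\tr \overline{B},
\end{equation*}
and this is exactly the step you leave as ``should cancel.'' In particular the strict case is \emph{not} the instance $i_m=1$ of your degenerate computation: the two Ky Fan sums involve \emph{different} Hermitian matrices, so no telescoping of partial sums of a common matrix is available, and your closing plan to deduce the strict case from the degenerate one would fail. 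Two ways to close the gap: (i) as the paper does, invoke Corollary \ref{2cor1} --- since $d_{m-1}(A)<d_m(A)$, the map $\sigma_{m-1}$ is G\^{a}teaux differentiable at $A$ with gradient $-\overline{M}\,\overline{M}^{T}$, whence $\sigma_{m-1}^{\prime}(A;B)=\tr\overline{B}$ directly; or (ii) argue by hand: $d_{m-1}(A)<d_m(A)$ forces $j_{m-1}=0$, i.e.\ $i_{m-1}=r_{m-1}$, so the Ky Fan sum is the full trace $\tr\widetilde{\widetilde{B}}^{\prime}$; since $\iota\bigl(\tilde{B}_{12}^{\prime}-(\tilde{B}_{12}^{\prime})^{T}\bigr)$ is traceless, $\tr\widetilde{\widetilde{B}}^{\prime}=\tr\bigl(-(\widetilde{M}^{\prime})^{T}B\widetilde{M}^{\prime}\bigr)$, and since $\overline{M}=\overline{M}^{\prime}\diamond\widetilde{M}^{\prime}$, this yields $\tr\overline{B}^{\prime}+\tr\widetilde{\widetilde{B}}^{\prime}=\tr\overline{B}$. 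With either insertion your proof coincides with the paper's; your handling of the boundary case $m=1$ via $\sigma_0\equiv 0$ and the empty bar block is fine.
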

\begin{proof}
By definition we have $i_m \geq 1.$
We deal with the following two possible cases separately. \\
\textbf{Case: $i_m \geq 2$} \\
This is the case when $d_m(A)=d_{m-1}(A).$ 
This implies 
\begin{equation*}
i_{m-1}=i_m-1, j_{m-1}=j_m+1, r_{m-1}=r_m.
\end{equation*}
Therefore we have $m-i_m = (m-1)-i_{m-1}.$
From $\text{Theorem } \ref{2thm4}$ we get,
\begin{align*}
 d_m^{\prime}(A; B) &= \frac{1}{2} \sigma_{m-1}^{\prime}(A;B)- \frac{1}{2} \sigma_m^{\prime}(A;B) \\
&=\frac{1}{2} (\tr \overline{B} + \sum_{j=1}^{i_m-1} \lambda_{j}^{\downarrow}(\widetilde{\widetilde{B}}) ) - 
 \frac{1}{2} (\tr \overline{B} + \sum_{j=1}^{i_m} \lambda_{j}^{\downarrow}(\widetilde{\widetilde{B}}) ) \\
&= -\frac{1}{2} \lambda_{i_m}^{\downarrow}(\widetilde{\widetilde{B}}).
\end{align*}
\textbf{Case: $i_m = 1$} \\
In this case we have $d_{m-1}(A) < d_m(A).$
By $\text{Corollary } \ref{2cor1}$ the map $\sigma_{m-1}$ is G\^ateaux 
differentiable at $A$ and we have 
$$\nabla \sigma_{m-1}(A) = - SS^T,$$ 
where $S$ is the submatrix consisting of columns with indices $1, \ldots, (m-1)+j_{m-1}$ of $M.$
But here we have $j_{m-1}=0$ which means that $(m-1)+j_{m-1} = m-i_m.$
In other words, we have $S= \overline{M}.$
This gives
\begin{align*}
\sigma_{m-1}^{\prime}(A;B) &= \nabla \sigma_{m-1}(A)(B) \\
&=\langle -\overline{M}\overline{M}^T, B \rangle \\
&= \tr (-\overline{M}\overline{M}^T B) \\
&= \tr(- \overline{M}^TB\overline{M}) \\
&= \tr \overline{B} 
\end{align*}
Therefore by $\text{Theorem } \ref{2thm4}$ we have 
\begin{align*}
\sigma_m^{\prime}(A;B) = \sigma_{m-1}^{\prime}(A;B)  + \lambda_{1}^{\downarrow}(\widetilde{\widetilde{B}}).
\end{align*}
This gives
\begin{equation*}
2 d_m^{\prime}(A;B) = - \lambda_{1}^{\downarrow} (\widetilde{\widetilde{B}})
\end{equation*}
which is the same as $(\ref{2eqn9})$ for $i_m=1.$
\end{proof}

\section{Clarke and Michel-Penot subdifferentials} \label{2mpsub}

Let us denote by $S_m(A)$ the set of normalised symplectic eigenvector pairs $(u,v)$ of $A$ corresponding to the symplectic eigenvalue $d_m(A).$
Let $\widehat{m}$ be the index of the smallest symplectic eigenvalue of $A$ equal to $d_m(A).$
In other words, $d_{j}(A)= d_{m}(A)$ implies $j \geq \widehat{m}.$

\begin{prop} \label{2prop1}
Let $A$ be an element of $ \mathbb{P}(2n)$  and $M$ in $Sp(2n,2n, A)$ be fixed.
The function $-d^{\prime}_{\widehat{m}}(A; \cdot)$ is sublinear and its  Fenchel subdifferential at zero is given by
\begin{equation*}
\partial (-d^{\prime}_{\widehat{m}}(A; \cdot))(0)= \conv \{-\frac{1}{2} (xx^T+yy^T): (x,y) \in S_m(A)\}.
\end{equation*}
\end{prop}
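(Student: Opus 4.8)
The plan is to reduce $-d_{\widehat m}'(A;\cdot)$ to the largest eigenvalue of $\widetilde{\widetilde B}$, and then to identify the claimed convex hull as the unique compact convex set whose support function is $-d_{\widehat m}'(A;\cdot)$.

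First I would observe that, because $\widehat m$ is the \emph{smallest} index with $d_{\widehat m}(A)=d_m(A)$, we have $i_{\widehat m}=1$, $r_{\widehat m}=r_m$, $\widehat m-1=m-i_m$ and $j_{\widehat m}=r_m-1$. Consequently the symplectic column partition of $I$ of order $(\widehat m-i_{\widehat m},r_{\widehat m},n-\widehat m-j_{\widehat m})$ is literally the partition $(m-i_m,r_m,n-m-j_m)$, so the blocks $\overline M,\widetilde M$ attached to $\widehat m$ coincide with those attached to $m$, and $\widetilde{\widetilde B}$ is the same matrix for both indices. Theorem \ref{2thm1} then yields
\[
-d_{\widehat m}'(A;B)=\tfrac12\,\lambda_1^{\downarrow}(\widetilde{\widetilde B})\qquad(B\in\mathbb S(2n)).
\]
Since $B\mapsto\widetilde{\widetilde B}$ is real-linear into the Hermitian matrices and $\lambda_1^{\downarrow}(X)=\max_{\lVert w\rVert=1}w^{\ast}Xw$ is sublinear, the function $-d_{\widehat m}'(A;\cdot)$ is finite everywhere and sublinear. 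A finite sublinear function equals the support function of its Fenchel subdifferential at $0$ (apply the max formula \cite[Theorem 3.1.8]{borwein} at $0$, using $(-d_{\widehat m}'(A;\cdot))'(0;B)=-d_{\widehat m}'(A;B)$), and a compact convex set is determined by its support function. Hence it is enough to show that the compact convex set $K:=\conv\{-\tfrac12(xx^{T}+yy^{T}):(x,y)\in S_m(A)\}$ has support function $B\mapsto\tfrac12\lambda_1^{\downarrow}(\widetilde{\widetilde B})$.

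Next I would parametrise $S_m(A)$ by the unit sphere of $\mathbb C^{r_m}$ through $\widetilde M$. Using that the columns of $\widetilde M$ are symplectic eigenvector pairs for $d_m(A)$, one checks the identity $A\widetilde M\left(\begin{smallmatrix}p\\q\end{smallmatrix}\right)=d_m(A)\,J\widetilde M\left(\begin{smallmatrix}-q\\p\end{smallmatrix}\right)$, so that for any unit vector $w=a+\iota b\in\mathbb C^{r_m}$ the pair $x=\widetilde M\left(\begin{smallmatrix}a\\-b\end{smallmatrix}\right)$, $y=\widetilde M\left(\begin{smallmatrix}b\\a\end{smallmatrix}\right)$ satisfies $Ax=d_m(A)Jy$, and $\langle x,Jy\rangle=\lVert a\rVert^2+\lVert b\rVert^2=1$ by the symplectic orthonormality of the columns of $\widetilde M$; thus $(x,y)\in S_m(A)$. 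Conversely, given any $(x,y)\in S_m(A)$, Corollary \ref{2cor2} extends it to a symplectic basis consisting of eigenvector pairs, among which the $r_m$ pairs for $d_m(A)$ form a symplectically orthonormal family; by \cite[Corollary 5.3]{jm} this family equals $\widetilde M R$ for an orthosymplectic $R$, so $x,y$ lie in the column span of $\widetilde M$ and $(x,y)$ has exactly the above form with $\lVert a\rVert^2+\lVert b\rVert^2=1$. This makes $w\mapsto(x,y)$ a surjection from the unit sphere of $\mathbb C^{r_m}$ onto $S_m(A)$. For such $(x,y)$ we have $xx^{T}+yy^{T}=\widetilde M\,\overline R\,\overline R^{T}\widetilde M^{T}$ with $\overline R=\left(\begin{smallmatrix}a&b\\-b&a\end{smallmatrix}\right)$, and the computation already carried out in (\ref{2eqn8}) (specialised to $U=a$, $V=b$, a single column) gives $\langle-\tfrac12(xx^{T}+yy^{T}),B\rangle=\tfrac12\,w^{\ast}\widetilde{\widetilde B}\,w$. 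Maximising over the unit sphere, using $\max_{\lVert w\rVert=1}w^{\ast}\widetilde{\widetilde B}w=\lambda_1^{\downarrow}(\widetilde{\widetilde B})$ and the fact that a linear functional attains the same maximum over a set and over its closed convex hull, yields
\[
\max_{X\in K}\langle X,B\rangle=\max_{(x,y)\in S_m(A)}\langle-\tfrac12(xx^{T}+yy^{T}),B\rangle=\tfrac12\lambda_1^{\downarrow}(\widetilde{\widetilde B})=-d_{\widehat m}'(A;B).
\]
Thus $K$ and $\partial(-d_{\widehat m}'(A;\cdot))(0)$ have the same support function, hence they are equal, which is the assertion.

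I expect the main obstacle to be the surjectivity half of the parametrisation, namely verifying that every normalised pair for $d_m(A)$ lies in the column span of $\widetilde M$; this is where the structural input \cite[Corollary 5.3]{jm} and Corollary \ref{2cor2} are essential. By contrast, the algebraic identity $\langle-\tfrac12(xx^{T}+yy^{T}),B\rangle=\tfrac12 w^{\ast}\widetilde{\widetilde B}w$ is just the $i_m=1$ specialisation of (\ref{2eqn8}), and the sublinearity together with support-function uniqueness are standard.
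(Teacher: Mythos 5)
Your proof is correct and follows essentially the same route as the paper's: both reduce $-d_{\widehat m}'(A;\cdot)$ to $\tfrac12\lambda_1^{\downarrow}(\widetilde{\widetilde B})$ via Theorem \ref{2thm1} with $i_{\widehat m}=1$, identify the claimed set through the support-function characterisation of the subdifferential at $0$, and parametrise $S_m(A)$ by unit vectors of $\mathbb{C}^{r_m}$ using Corollary \ref{2cor2} together with \cite[Corollary 5.3]{jm} before invoking the Rayleigh-quotient formula for $\lambda_1^{\downarrow}$. Your treatment is if anything slightly more explicit (checking that the partitions for $m$ and $\widehat m$ coincide, and verifying directly that the constructed pairs lie in $S_m(A)$), but the decomposition, key lemmas, and computation are the same as in the paper.
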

\begin{proof}
By definition we have $i_{\widehat{m}}=1.$
Therefore by $\text{Theorem } \ref{2thm1}$ we have
$$ -d_{\widehat{m}}^{\prime}(A;B) = \frac{1}{2} \lambda_{1}^{\downarrow}(\widetilde{\widetilde{B}})$$
for all $B \in \mathbb{S}(2n).$
The map $B \mapsto \widetilde{\widetilde{B}}$ is a linear map from $\mathbb{S}(2n)$ to the space of $r_m \times r_m$ Hermitian matrices,
and the largest eigenvalue map $\lambda_{1}^{\downarrow}$ on the space of $r_m \times r_m$ Hermitian matrices is sublinear. 
Therefore $-d^{\prime}_{\widehat{m}}(A; \cdot)$ is a sublinear map.
It suffices \cite[ Remark 1.2.3, p.168]{hl}  to show that 
\begin{equation*}
-d^{\prime}_{\widehat{m}}(A; B) = \max \{-\frac{1}{2} \langle xx^T+yy^T, B \rangle: (x,y) \in S_m(A) \}
\end{equation*}
for all $B \in \mathbb{S}(2n).$ 
Let $(x,y) \in S_m(A)$ be arbitrary.
By $\text{Corollary } \ref{2cor2}$ extend $[x, y]$ to $S$ in $Sp(2n, 2r_m)$ with columns consisting of symplectic eigenvector pairs of $A$ corresponding to $d_m(A).$
By $\text{Corollary } 5.3$ of \cite{jm} we get a $2r_m \times 2r_m$ orthosymplectic matrix $Q$ such that $S=\widetilde{M}Q.$
We know that $Q$ is of the form 
$$ \begin{pmatrix}
U & V \\
-V & U
\end{pmatrix},$$
where $U,V$ are $r_m \times r_m$ real matrices such that $U + \iota V$ is unitary. Let $u,v$ be the first columns of $U$ and $V$ respectively.
This implies
\begin{equation} \label{2eqn29}
[x, y]= \widetilde{M} \begin{pmatrix}
u & v \\
-v & u
\end{pmatrix}.
\end{equation}
Conversely, if $u+\iota v$ is a unit vector in $\mathbb{C}^{r_m}$ and $x, y \in \mathbb{R}^{2n}$ satisfy the above relation $(\ref{2eqn29}),$ then $(x,y) \in S_m(A).$
Therefore $(\ref{2eqn29})$ gives a one to one correspondence  $(x, y) \mapsto u+ \iota v$  between $S_m(A)$ and the set of unit vectors in $\mathbb{C}^{r_m}.$
We consider $\mathbb{C}^{r_m}$ equipped with the usual inner product $\langle z, w \rangle = z^{\ast}w$
for all $z,w \in \mathbb{C}^{r_m}.$
For simplicity, we use the same notation for the different inner products discussed here. 
Their use will be clear from the context.
We have
\begin{align*}
-\frac{1}{2} \langle xx^T+yy^T, B \rangle &=  -\frac{1}{2} \langle [x, y] [x, y]^T, B \rangle \\
&= -\frac{1}{2} \tr [x, y]^T B [x, y] \\
&=-\frac{1}{2} \tr  \begin{pmatrix}
u & v \\
-v & u
\end{pmatrix}^T \widetilde{M}^T B \widetilde{M} \begin{pmatrix}
u & v \\
-v & u
\end{pmatrix}  \\
&= \frac{1}{2} \tr \begin{pmatrix}
u & v \\
-v & u
\end{pmatrix}^T \widetilde{B} \begin{pmatrix}
u & v \\
-v & u
\end{pmatrix} \\
&= \frac{1}{2}  (u+ \iota v)^{\ast} \widetilde{\widetilde{B}} (u+ \iota v) \\
&= \frac{1}{2} \langle u+ \iota v, \widetilde{\widetilde{B}} (u+ \iota v) \rangle
\end{align*}

Therefore we get 
\begin{align*}
-d^{\prime}_{\widehat{m}}(A; B) &= \frac{1}{2} \lambda_{1}^{\downarrow}(\widetilde{\widetilde{B}}) \\
&= \frac{1}{2} \max \{ \langle u+ \iota v, \widetilde{\widetilde{B}} (u+ \iota v) \rangle: \|u+ \iota v\|=1 \} \\
&=  \max \{-\frac{1}{2} \langle xx^T+yy^T, B \rangle: (x,y) \in S_m(A) \}.
\end{align*}
The last equality follows from the above observation that $(\ref{2eqn29})$ is a one to one correspondence between $S_m(A)$ and the set of unit vectors in $\mathbb{C}^{r_m}.$
This completes the proof.
\end{proof}

\begin{thm} \label{2thm2}
Let $A$ be an element of $\mathbb{P}(2n).$
The Michel-Penot subdifferentials of $-d_m$  coincide at $A$ for all the choices of $m$ corresponding to the equal symplectic eigenvalues of $A$ and are given by
\begin{equation*}
\partial^{\diamond} (-d_m)(A) = \partial (-d^{\prime}_{\widehat{m}}(A; \cdot))(0).
\end{equation*}
\end{thm}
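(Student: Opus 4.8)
The plan is to compute the Michel-Penot directional derivative $(-d_m)^{\diamond}(A;\cdot)$ explicitly, identify it with the sublinear map $-d_{\widehat{m}}^{\prime}(A;\cdot)$ of Proposition \ref{2prop1}, and then read off the subdifferential directly from definition (\ref{2eqn24}). Since $d_m$ is directionally differentiable by Theorem \ref{2thm1}, I may use formula (\ref{2eqn22}): for every $H\in\mathbb{S}(2n)$,
\[
(-d_m)^{\diamond}(A;H)=\sup_{X\in\mathbb{S}(2n)}\bigl\{(-d_m)^{\prime}(A;H+X)-(-d_m)^{\prime}(A;X)\bigr\}.
\]
By Theorem \ref{2thm1} we have $(-d_m)^{\prime}(A;B)=\tfrac12\lambda_{i_m}^{\downarrow}(\widetilde{\widetilde{B}})$, and since $B\mapsto\widetilde{\widetilde{B}}$ is linear, $\widetilde{\widetilde{H+X}}=\widetilde{\widetilde{H}}+\widetilde{\widetilde{X}}$. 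The first step is to note that, because $\widetilde{M}$ has full column rank, the linear map $X\mapsto\widetilde{\widetilde{X}}$ sends $\mathbb{S}(2n)$ onto the whole space of $r_m\times r_m$ Hermitian matrices. Writing $K=\widetilde{\widetilde{H}}$ and letting $Y=\widetilde{\widetilde{X}}$ range over all Hermitian matrices, this reduces the computation to the purely matrix-analytic identity
\[
\sup_{Y}\bigl\{\lambda_{i_m}^{\downarrow}(K+Y)-\lambda_{i_m}^{\downarrow}(Y)\bigr\}=\lambda_{1}^{\downarrow}(K),
\]
the supremum being taken over $r_m\times r_m$ Hermitian $Y$.

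I expect this identity to be the main obstacle, and I would isolate it as a separate lemma. The inequality $\le$ is immediate from Weyl's monotonicity bound $\lambda_{i_m}^{\downarrow}(K+Y)\le\lambda_{i_m}^{\downarrow}(Y)+\lambda_{1}^{\downarrow}(K)$. For the reverse inequality I would exhibit an explicit family attaining the value: fix a unit eigenvector $w$ of $K$ for $\lambda_{1}^{\downarrow}(K)$, split $w^{\perp}$ into subspaces of dimensions $i_m-1$ and $r_m-i_m$ with orthogonal projections $P_H,P_L$, and set $Y_t=tP_H-tP_L$. Since $w$ is an eigenvector of $K$, the line $\mathbb{C}w$ is invariant under both $K$ and $Y_t$, so $(K+Y_t)w=\lambda_{1}^{\downarrow}(K)\,w$ exactly; meanwhile a Weyl estimate shows that for $t$ large the $i_m-1$ eigenvalues stemming from $P_H$ exceed $\lambda_{1}^{\downarrow}(K)$ and the $r_m-i_m$ eigenvalues stemming from $P_L$ fall below it. Hence for all sufficiently large $t$ one has $\lambda_{i_m}^{\downarrow}(K+Y_t)=\lambda_{1}^{\downarrow}(K)$ while $\lambda_{i_m}^{\downarrow}(Y_t)=0$, which realises the claimed supremum. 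Note that this argument is insensitive to the value of $i_m$, which is the crucial point: the right-hand side $\lambda_1^{\downarrow}(K)$ does not involve $i_m$ at all.

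Combining these facts gives $(-d_m)^{\diamond}(A;H)=\tfrac12\lambda_{1}^{\downarrow}(\widetilde{\widetilde{H}})$, which is exactly $-d_{\widehat{m}}^{\prime}(A;H)$ by Theorem \ref{2thm1} applied with $i_{\widehat{m}}=1$. Since $\widetilde{M}$, and hence the map $H\mapsto\widetilde{\widetilde{H}}$, depends on $m$ only through the value $d_m(A)$, this expression is the same for every index $m$ sharing the symplectic eigenvalue $d_m(A)$, establishing that the Michel-Penot directional derivatives coincide. Finally, Proposition \ref{2prop1} guarantees that $-d_{\widehat{m}}^{\prime}(A;\cdot)$ is sublinear; therefore the defining condition (\ref{2eqn24}) for $\partial^{\diamond}(-d_m)(A)$, namely $\langle X,H\rangle\le(-d_m)^{\diamond}(A;H)=-d_{\widehat{m}}^{\prime}(A;H)$ for all $H$, is precisely the condition defining the Fenchel subdifferential of the sublinear map $-d_{\widehat{m}}^{\prime}(A;\cdot)$ at $0$. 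This yields $\partial^{\diamond}(-d_m)(A)=\partial\bigl(-d_{\widehat{m}}^{\prime}(A;\cdot)\bigr)(0)$ and completes the proof.
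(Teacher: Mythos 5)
Your proposal is correct and follows essentially the same route as the paper's proof: both reduce, via formula (\ref{2eqn22}), Theorem \ref{2thm1}, and the surjectivity of $H \mapsto \widetilde{\widetilde{H}}$, to the Hermitian-matrix identity $\sup_{C}\{\lambda_{i_m}^{\downarrow}(K+C)-\lambda_{i_m}^{\downarrow}(C)\}=\lambda_{1}^{\downarrow}(K)$, establish $\leq$ by Weyl's inequality, and then identify the resulting Michel-Penot derivative with $-d^{\prime}_{\widehat{m}}(A;\cdot)$ so that the subdifferential claim follows from the definitions. The only difference is that where the paper cites the proof of Theorem 4.2 of \cite{hiriart1999} for a matrix attaining equality in Weyl's bound, you construct it explicitly (and correctly) via $Y_t = tP_H - tP_L$, making the argument self-contained.
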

 \begin{proof}
We saw that $-d^{\prime}_{\widehat{m}}(A; \cdot)$ is convex and takes value zero at zero.
By $\text{Proposition } 3.1.6$ of \cite{borwein} we have 
\begin{equation*}
 \partial (-d^{\prime}_{\widehat{m}}(A; \cdot))(0)=  \conv \{B \in \mathbb{S}(2n):  \langle B, H \rangle \leq -d^{\prime}_{\widehat{m}}(A; H) \ \ \forall H \in \mathbb{S}(2n)\}.
\end{equation*}
By the definition of Michel-Penot subdifferential it therefore suffices to show that $(-d_{m})^{\diamond}(A; B)= -d^{\prime}_{\widehat{m}}(A; B)$ for all $B$ in $\mathbb{S}(2n).$
By $(\ref{2eqn22})$ it is equivalent to showing
\begin{equation} \label{2eqn11}
\sup_{H \in \mathbb{S}(2n)} \{-d^{\prime}_{m}(A; B+H)+d^{\prime}_{m}(A; H)\} = -d^{\prime}_{\widehat{m}}(A; B).
\end{equation}
Let $M \in Sp(2n, 2n, A)$ be fixed 
and $M = \overline{M} \diamond \widetilde{M} \diamond \widehat{M}$
be the symplectic column partition of $M$ of order $(m-i_m, r_m, n-m-j_m).$
Let $B, H$ be elements of  $ \mathbb{S}(2n).$  
We recall the meaning of $\widetilde{\widetilde{B}}.$ 
Let us write the block matrix form of $\widetilde{B} = - \widetilde{M}^TB \widetilde{M}$ as
\[\widetilde{B}=\begin{pmatrix}
 \tilde{B}_{11} & \tilde{B}_{12} \\
 \tilde{B}_{12}^T & \tilde{B}_{22}
 \end{pmatrix}, \]
where each block is of order $r_m \times r_m.$
The matrix $\widetilde{\widetilde{B}}$ is the $r_m \times r_m$ Hermitian matrix given by 
$\tilde{B}_{11} + \tilde{B}_{22} + \iota (\tilde{B}_{12} - \tilde{B}_{12}^{T} ).$
Similarly we have $\widetilde{\widetilde{H}}.$
It is easy to see that 
$$\widetilde{ \widetilde{B+H}} = \widetilde{\widetilde{B}} + \widetilde{ \widetilde{H}}.$$
By $\text{Theorem } \ref{2thm1}$ we get
\begin{align*}
-d^{\prime}_{m}(A; B+H)+d^{\prime}_{m}(A; H) &= \frac{1}{2} \lambda_{i_m}^{\downarrow}(\widetilde{\widetilde{B+H}}) - \frac{1}{2} \lambda_{i_m}^{\downarrow}(\widetilde{\widetilde{H}}) \\
&= \frac{1}{2} \lambda_{i_m}^{\downarrow}(\widetilde{\widetilde{B}}+\widetilde{\widetilde{H}}) - \frac{1}{2} \lambda_{i_m}^{\downarrow}(\widetilde{\widetilde{H}}).
\end{align*}
It is clear that $H \mapsto \widetilde{\widetilde{H}}$ is an onto map from $\mathbb{S}(2n)$ to the space of $r_m \times r_m$ Hermitian matrices.
Therefore by $(\ref{2eqn11})$ we need to show that
\begin{equation} \label{2eqn14}
\frac{1}{2} \sup_{C} \{ \lambda_{i_m}^{\downarrow}(\widetilde{\widetilde{B}}+ C) - \lambda_{i_m}^{\downarrow}(C)\} = -d^{\prime}_{\widehat{m}}(A; B),
\end{equation}
where $C$ varies over the space of $r_m \times r_m$ Hermitian matrices.
By an inequality due to Weyl \cite[Corollary III.2.2]{rbh}, we have 
\begin{equation} \label{2eqn12}
\lambda_{i_m}^{\downarrow}(\widetilde{\widetilde{B}}+ C) \leq   \lambda_{i_m}^{\downarrow}(C) + \lambda_{1}^{\downarrow}(\widetilde{\widetilde{B}}) 
\end{equation}
for all Hermitian matrices $C.$
We can construct a Hermitian matrix $C $ for which equality holds in $(\ref{2eqn12}).$
See the proof of $\text{Theorem } 4.2$ in \cite{hiriart1999}.
This gives
\begin{equation*} 
\sup_{C} \{ \lambda_{i_m}^{\downarrow}(\widetilde{\widetilde{B}}+ C) - \lambda_{i_m}^{\downarrow}(C) \} = \lambda_{1}^{\downarrow}(\widetilde{\widetilde{B}}),
\end{equation*}
where $C$ varies over the space of $r_m \times r_m$ Hermitian matrices.
But we know by $\text{Theorem } \ref{2thm1}$ that $-d_{\widehat{m}}^{\prime}(A; B)= \frac{1}{2} \lambda_{1}^{\downarrow}(\widetilde{\widetilde{B}}).$
This implies that $(\ref{2eqn14})$ holds.
This completes the proof.
 \end{proof}

We now give the main result of this section which states that the Clarke and Michel-Penot subdifferentials of $-d_m$ are equal.

\begin{thm} \label{2thm3}
Let $A$ be an element of $\mathbb{P}(2n).$
The Clarke and Michel-Penot subdifferentials of $-d_m$ are equal at $A$ and they are given by
\begin{equation*}
\partial^{\circ} (-d_m)(A)=\partial^{\diamond} (-d_m)(A) = \conv \{-\frac{1}{2} (xx^T+yy^T): (x,y) \in S_m(A)\}.
\end{equation*}
In particular, the subdifferentials are independent of the choice of $m$ corresponding to equal symplectic eigenvalues of $A.$
\end{thm}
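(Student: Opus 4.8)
The plan is to deduce the statement from the results already established, reducing everything to an equality of two directional derivatives. By Theorem~\ref{2thm2} together with Proposition~\ref{2prop1},
\[
\partial^{\diamond}(-d_m)(A) = \partial(-d'_{\widehat{m}}(A;\cdot))(0) = \conv\{-\tfrac{1}{2}(xx^T+yy^T):(x,y)\in S_m(A)\},
\]
and the claimed independence of the choice of $m$ is immediate, since $S_m(A)$ depends only on the common value $d_m(A)$. Thus it remains only to prove that the Clarke and Michel-Penot subdifferentials coincide, and for this it is enough to show that the two directional derivatives agree, i.e. $(-d_m)^{\circ}(A;B)=(-d_m)^{\diamond}(A;B)$ for every $B\in\mathbb{S}(2n)$. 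The inequality $(-d_m)^{\diamond}(A;B)\le(-d_m)^{\circ}(A;B)$ is the general fact that the Michel-Penot derivative is dominated by the Clarke derivative (take $Y=A+tX$ in the Clarke $\limsup$). Recalling from the proof of Theorem~\ref{2thm2} that $(-d_m)^{\diamond}(A;B)=-d'_{\widehat{m}}(A;B)=\tfrac{1}{2}\lambda_{1}^{\downarrow}(\widetilde{\widetilde{B}})$, it therefore suffices to prove the reverse inequality $(-d_m)^{\circ}(A;B)\le \tfrac{1}{2}\lambda_{1}^{\downarrow}(\widetilde{\widetilde{B}})$.

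Since $-d_m$ is directionally differentiable throughout the open set $\mathbb{P}(2n)$ by Theorem~\ref{2thm1}, formula~(\ref{2eqn21}) gives
\[
(-d_m)^{\circ}(A;B)=\limsup_{X\to A}\bigl(-d'_{m}(X;B)\bigr)=\tfrac{1}{2}\limsup_{X\to A}\lambda_{i_m(X)}^{\downarrow}\bigl(\widetilde{\widetilde{B}}^{(X)}\bigr).
\]
The next step is to control each term in the $\limsup$ by isolating the cluster of symplectic eigenvalues near $d_m(A)$. Because $d_{m-i_m}(A)<d_{m-i_m+1}(A)=\cdots=d_{m+j_m}(A)<d_{m+j_m+1}(A)$ and the symplectic eigenvalues are continuous, for $X$ close to $A$ exactly the $r_m$ symplectic eigenvalues with indices $m-i_m+1,\ldots,m+j_m$ lie in a small neighbourhood of $d_m(A)$ and stay separated from the rest. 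I would choose, by Corollary~\ref{2cor2}, a matrix $N(X)\in Sp(2n,2r_m,X)$ whose columns are normalised symplectic eigenvector pairs of $X$ for these $r_m$ eigenvalues, set $\widehat{B}(X)=-N(X)^TB\,N(X)$, and let $\widehat{\widehat{B}}(X)$ be the associated $r_m\times r_m$ Hermitian matrix. The symplectic eigenvectors of $X$ for the single value $d_m(X)$ form a symplectic column sub-block of $N(X)$, so $\widetilde{\widetilde{B}}^{(X)}$ is a principal submatrix of $\widehat{\widehat{B}}(X)$; since $i_m(X)\ge 1$, Cauchy's interlacing theorem \cite{rbh} then yields
\[
\lambda_{i_m(X)}^{\downarrow}\bigl(\widetilde{\widetilde{B}}^{(X)}\bigr)\le\lambda_{1}^{\downarrow}\bigl(\widehat{\widehat{B}}(X)\bigr).
\]

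Finally I would establish the continuity $\lambda_{1}^{\downarrow}(\widehat{\widehat{B}}(X))\to\lambda_{1}^{\downarrow}(\widetilde{\widetilde{B}})$ as $X\to A$. Near $A$ the normalised eigenvectors are bounded (from $\langle u,Au\rangle=d_m(X)\langle u,Jv\rangle$ and positive definiteness), so any limit point of $N(X)$ is a symplectically orthonormal family of eigenvector pairs of $A$ for $d_m(A)$, hence equals $\widetilde{M}Q$ for some orthosymplectic $Q$ by Corollary~5.3 of \cite{jm}; consequently every limit point of $\widehat{\widehat{B}}(X)$ is unitarily equivalent to $\widetilde{\widetilde{B}}$ and has the same spectrum. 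This gives $\lambda_1^{\downarrow}(\widehat{\widehat{B}}(X))\to\lambda_1^{\downarrow}(\widetilde{\widetilde{B}})$, and combining the last three displays yields $(-d_m)^{\circ}(A;B)\le\tfrac{1}{2}\lambda_1^{\downarrow}(\widetilde{\widetilde{B}})$, whence the two directional derivatives agree and the two subdifferentials coincide with the set computed above. I expect the main obstacle to be precisely this last perturbation step: verifying that the cluster eigenvector matrices $N(X)$ stay bounded and converge, up to the orthosymplectic action, to an eigenvector matrix of $A$, so that the top eigenvalue of the cluster Hermitian matrix is continuous at $A$. The interlacing bound and the reduction to directional derivatives are routine once this continuity is in place.
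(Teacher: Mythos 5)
Your proof is correct, and its core strategy coincides with the paper's: both reduce the theorem (via Proposition \ref{2prop1}, Theorem \ref{2thm2}, and the general inequality between the Michel-Penot and Clarke directional derivatives) to the single inequality $(-d_m)^{\circ}(A;B)\le \frac{1}{2}\lambda_{1}^{\downarrow}(\widetilde{\widetilde{B}})$ for every $B$, and both prove it by combining $(\ref{2eqn21})$, Theorem \ref{2thm1}, boundedness of normalised symplectic eigenvector matrices near $A$ (your estimate is the paper's condition-number bound; note the identity should read $\langle u,Xu\rangle=d\langle u,Jv\rangle$, with $X$ rather than $A$), and Cauchy interlacing. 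The difference lies in where the limit and the interlacing are taken. The paper picks a sequence $A_{(p)}\to A$ attaining the $\limsup$, passes to a subsequence on which the coincidence set $\mathcal{I}=\{i:d_i(A_{(p)})=d_m(A_{(p)})\}$ is constant, extracts a convergent subsequence of full eigenbasis matrices $M_{(p)}\to M\in Sp(2n,2n,A)$, defines $\widetilde{\widetilde{B}}$ from this limit $M$ (legitimate, since Theorem \ref{2thm1} is independent of the choice of $M$), and applies interlacing once, at the limit, where $\widetilde{\widetilde{B}}_{(0)}$ is a compression of $\widetilde{\widetilde{B}}$. You instead apply interlacing at each nearby $X$, compressing $\widetilde{\widetilde{B}}^{(X)}$ into the fixed-size cluster matrix $\widehat{\widehat{B}}(X)$, and then prove continuity of $\lambda_{1}^{\downarrow}(\widehat{\widehat{B}}(X))$ at $A$ by compactness and Corollary 5.3 of \cite{jm}; this avoids the paper's index-set stabilisation, but it requires an identification step the paper never needs. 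That step is the only thing you leave unproved: if $N_0=\widetilde{M}Q$ with $Q=\begin{psmallmatrix}U&V\\-V&U\end{psmallmatrix}$ orthosymplectic, you must check that the Hermitian matrix built from $-N_0^{T}BN_0=Q^{T}\widetilde{B}Q$ is $(U+\iota V)^{\ast}\widetilde{\widetilde{B}}(U+\iota V)$, so that its spectrum agrees with that of $\widetilde{\widetilde{B}}$. This is true and routine (the trace version is exactly the computation $(\ref{2eqn8})$, and the rank-one version appears in the proof of Proposition \ref{2prop1}), so it is a gap of exposition rather than of substance.
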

\begin{proof}
By $\text{Proposition }\ref{2prop1}$ and $\text{Theorem } \ref{2thm2}$ we have
$$\partial^{\diamond} (-d_m)(A) = \conv \{-\frac{1}{2} (xx^T+yy^T): (x,y) \in S_m(A)\}.$$
By $\text{Corollary } 6.1.2$ of \cite{borwein} we have $\partial^{\diamond} (-d_m)(A) \subseteq \partial^{\circ} (-d_m)(A).$
Therefore it only remains to prove is that $\partial^{\circ} (-d_m)(A) \subseteq \partial^{\diamond} (-d_m)(A).$

Let $B$ in $ \mathbb{S}(2n)$ be arbitrary. 
By the relation  $(\ref{2eqn21})$ we get a sequence $A_{(p)} \in \mathbb{P}(2n)$ for $p \in \mathbb{N}$ such that $ \lim_{p \to \infty} A_{(p)}=A$ and  
\begin{equation} \label{2eqn17}
 (-d_m)^{\circ}(A; B) = - \lim_{p \to \infty} d_{m}^{\prime}( A_{(p)};B).
\end{equation}
 Let $\mathcal{I}_p=\{i: d_i( A_{(p)})= d_m( A_{(p)})\}$ for every $p \in \mathbb{N}.$
There are only finitely many choices for $\mathcal{I}_p$ for each $p.$
Therefore we can get a subsequence of $( A_{(p)})_{p \in \mathbb{N}}$ such that $\mathcal{I}_p$ is independent of $p.$
Let us denote the subsequence by the same sequence $( A_{(p)})_{p \in \mathbb{N}}$ for convenience
and let $\mathcal{I}$ denote the common index set $\mathcal{I}_p.$
Let $M_{(p)}$ be an element of $ Sp(2n, 2n,  A_{(p)})$ for all $p \in\mathbb{N}.$
If $(u,v)$ is a pair of normalized symplectic eigenvectors of $ A_{(p)}$ corresponding to  a symplectic eigenvalue $d,$ 
we get
\begin{align*}
\|u\|^2 + \|v\|^2 & \leq \| A_{(p)}^{-1} \| (\|(A_(p)^{1/2} u \|^2 + \|A_{(p)}^{1/2} v \|^2 ) \\
& = \| A_{(p)}^{-1} \| \cdot \|A_{(p)}^{1/2} u - \iota A_{(p)}^{1/2} v \|^2  \\
& =  2 d \langle u, J v \rangle  \| A_{(p)}^{-1} \| \\
&= 2 d \| A_{(p)}^{-1} \| \\
& \leq 2 \| A_{(p)} \| \cdot \| A_{(p)}^{-1} \| \\
& = 2 \kappa(A_{(p)}),
\end{align*}
where $\| A_{(p)} \|$ and  $ \| A_{(p)}^{-1} \|$ represent the operator norms of $A_{(p)}$ and  $A_{(p)}^{-1},$
and $\kappa(A_{(p)})$ is the condition number of $A_{(p)}.$
The second equality follows from $\text{Proposition } 2.3$ of \cite{jm}, and 
the second inequality follows from the fact that $d \leq \| A_{(p)} \|.$ 
Therefore we have 
\begin{equation} \label{2eqn26}
\| M_{(p)} \|_{F}^{2} \leq 2n \kappa(A_{(p)}),
\end{equation}
where $\|M_{(p)}\|_{F}$ represents the Frobenius norm of $M_{(p)}$ 
for all $p \in \mathbb{N}.$
We know that $\kappa$ is a continuous function and the sequence $(A_{(p)})_{p \in \mathbb{N}}$ is convergent.
Therefore the sequence $(\kappa(A_{(p)}))_{p \in \mathbb{N}}$ is also convergent, and hence bounded.
By $(\ref{2eqn26})$ the sequence $(M_{(p)})_{p \in \mathbb{N}}$ of $2n \times 2n$ real matrices is bounded as well.
By taking a subsequence we can assume that $(M_{(p)})_{p \in \mathbb{N}}$ converges to some $2n \times 2n$ real matrix $M.$
We know that $Sp(2n)$ is a closed set and therefore $M \in Sp(2n).$
By continuity of the symplectic eigenvalue maps we also have $M \in Sp(2n, 2n, A).$

Let $m_1= \min \mathcal{I}$ and $m_2= \max \mathcal{I}.$
Let  $M_{(p)}= \overline{M}_{(p)} \diamond \widetilde{M}_{(p)} \diamond \widehat{M}_{(p)}$ be the symplectic column partition of $M_{(p)}$ of order $(m_1-1, m_2-m_1+1, n-m_2).$  
Let 
\begin{equation*}
\widetilde{B}_{(p)} = -\widetilde{M}_{(p)}^{T} B \widetilde{M}_{(p)} ,
\end{equation*}

\begin{equation*}
 \widetilde{M}_{(0)} = \lim_{p \to \infty} \widetilde{M}_{(p)}
\end{equation*}
and
\begin{equation} \label{2eqn28}
\widetilde{B}_{(0)} = \lim_{p \to \infty} \widetilde{B}_{(p)} = -\widetilde{M}_{(0)}^{T} B \widetilde{M}_{(0)}.
\end{equation}
Consider the block matrix form of $\widetilde{B}_{(p)}$ given by
 \[\widetilde{B}_{(p)}=\begin{pmatrix}
 (\widetilde{B}_{(p)})_{11}  & (\widetilde{B}_{(p)})_{12}\\
 (\widetilde{B}_{(p)})_{12}^{T} & (\widetilde{B}_{(p)})_{22}
 \end{pmatrix}, \]
where each block has size $m_2-m_1+1.$
Let 
\begin{equation}
\widetilde{\widetilde{B}}_{(p)} = (\widetilde{B}_{(p)})_{11}+  (\widetilde{B}_{(p)})_{22} + \iota ((\widetilde{B}_{(p)})_{12} -(\widetilde{B}_{(p)})_{12}^{T}   ) 
\end{equation}
be the Hermitian matrix associated with $ \widetilde{B}_{(p)}.$ 
Let $$\widetilde{\widetilde{B}}_{(0)} = \lim_{p \to \infty} \widetilde{\widetilde{B}}_{(p)}.$$ 
Let  $M= \overline{M} \diamond \widetilde{M} \diamond \widehat{M}$ be the symplectic column partition of $M$ of order $(m-i_m, r_m, n-m-j_m).$  
Let $\widetilde{B} = -\widetilde{M}^{T} B \widetilde{M}$ and write $\widetilde{B}$ in the block matrix form
 \[\widetilde{B}=\begin{pmatrix}
 \tilde{B}_{11} & \tilde{B}_{12} \\
 \tilde{B}_{12}^T & \tilde{B}_{22}
 \end{pmatrix}, \]
 where each block has order $r_m \times r_m.$
Denote by $\widetilde{\widetilde{B}}$ the Hermitian matrix $\tilde{B}_{11} + \tilde{B}_{22} + \iota (\tilde{B}_{12} - \tilde{B}_{12}^{T} ).$
The matrix $ \widetilde{M}_{(0)}$ is the submatrix of $M$ consisting of the $i\text{th}$ and $(n+i)\text{th}$ columns of $M$ for all $i \in \mathcal{I}.$ 
By continuity of the symplectic eigenvalues we have $\mathcal{I} \subseteq \{m-i_m+1, \ldots, m+j_m\}.$ 
Therefore $ \widetilde{M}_{(0)}$ is also  a submatrix of $\widetilde{M}.$
It thus follows by relation $(\ref{2eqn28})$ that  each block of $\widetilde{B}_{(0)}$ is obtained by removing $i\text{th}$ row and $i\text{th}$ column of  $\widetilde{B}$ for all $i$ not in $\mathcal{I}.$ 
Therefore $\widetilde{\widetilde{B}}_{(0)}$ is a compression of $\widetilde{\widetilde{B}}.$
By Cauchy interlacing principle we have
$$ \lambda_{1}^{\downarrow}(\widetilde{\widetilde{B}}_{(p)}) \leq  \lambda_{1}^{\downarrow}(\widetilde{\widetilde{B}}).$$
Using equation $(\ref{2eqn17})$ we get
\begin{align*}
(-d_m)^{\circ}(A; B) &= - \lim_{p \to \infty} d_{m}^{\prime}(A_{(p)};B) \\
&\leq \frac{1}{2} \lim_{p \to \infty} \lambda_{1}^{\downarrow}(\widetilde{\widetilde{B}}_{(p)}) \\
&= \frac{1}{2}  \lambda_{1}^{\downarrow}(\lim_{p \to \infty} \widetilde{\widetilde{B}}_{(p)}) \\
&= \frac{1}{2}  \lambda_{1}^{\downarrow}( \widetilde{\widetilde{B}}_{(0)}) \\
& \leq  \frac{1}{2}  \lambda_{1}^{\downarrow}( \widetilde{\widetilde{B}}) \\
&= -d_{\widehat{m}}^{\prime}(A; B).
\end{align*}
Thus we have proved that $(-d_m)^{\circ}(A; B)  \leq -d_{\widehat{m}}^{\prime}(A; B)$ for all $B$ in $ \mathbb{S}(2n).$
This implies $\partial^{\circ} (-d_m)(A) \subseteq \partial (-d_{\widehat{m}}^{\prime}(A;\cdot))(0)$ by definition.
By $\text{Theorem } \ref{2thm2}$ we know that  $ \partial^{\diamond} (-d_m)(A)=\partial (-d_{\widehat{m}}^{\prime}(A;\cdot))(0).$
We have thus proved that $\partial^{\circ} (-d_m)(A) \subseteq \partial^{\diamond} (-d_m)(A).$
\end{proof}

The following is a well known result.
A proof of this result using matrix inequalities  can be found in \cite[Theorem 8.15]{degosson}.
We give an alternate proof of this result using the Michel-Penot and Clarke subdifferential of $-d_m.$

\begin{cor} \label{2cor3}
For every $A, B$ in $\mathbb{P}(2n),$ we have $d_j(A) \leq d_j(B)$ for all $j=1,\ldots, n,$ whenever $A \leq B.$ 
\end{cor}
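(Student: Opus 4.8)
The plan is to deduce the monotonicity of symplectic eigenvalues from the structure of the Clarke/Michel-Penot subdifferential of $-d_m$ obtained in Theorem \ref{2thm3}, together with a monotonicity criterion for locally Lipschitz functions. The guiding principle is that a locally Lipschitz function $f$ on an open convex set is nonincreasing along a direction $B \geq 0$ (equivalently, $-d_j$ is nonincreasing, i.e. $d_j$ is nondecreasing, when we move from $A$ toward a larger matrix) precisely when its Clarke directional derivative satisfies a suitable sign condition, which in turn is governed by the generalized gradient $\partial^{\circ}(-d_j)$.

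First I would observe that each $d_j$ is locally Lipschitz on $\mathbb{P}(2n)$ (it is directionally differentiable everywhere and continuous, being a difference of convex functions via $2d_j = \sigma_{j-1}-\sigma_j$), so the Clarke calculus applies. Next, fix $A, B \in \mathbb{P}(2n)$ with $A \leq B$, and consider the segment $A(t) = A + t(B-A)$ for $t \in [0,1]$; note $B - A \geq 0$ and each $A(t) \in \mathbb{P}(2n)$ by convexity of $\mathbb{P}(2n)$. I would apply the Clarke mean value theorem (or Lebourg's mean value theorem) to the scalar function $g(t) = d_j(A(t))$: there is some $t_0 \in (0,1)$ and some $C \in \partial^{\circ} d_j(A(t_0))$ with
\begin{equation*}
d_j(B) - d_j(A) = \langle C, B - A \rangle.
\end{equation*}
By Theorem \ref{2thm3}, $\partial^{\circ}(-d_j)(A(t_0)) = \conv\{-\tfrac{1}{2}(xx^T + yy^T): (x,y) \in S_j(A(t_0))\}$, so every element $C$ of $\partial^{\circ} d_j(A(t_0))$ is a convex combination of matrices of the form $\tfrac{1}{2}(xx^T + yy^T)$, which are positive semidefinite. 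Hence $C \geq 0$, and since $B - A \geq 0$ we get $\langle C, B-A \rangle = \tr\, C(B-A) \geq 0$, giving $d_j(B) \geq d_j(A)$.

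The step I expect to require the most care is invoking the Clarke mean value theorem in this matrix setting and correctly relating $\partial^{\circ} d_j$ to $\partial^{\circ}(-d_j)$; since the Clarke subdifferential satisfies $\partial^{\circ}(-f) = -\partial^{\circ} f$, the set $\partial^{\circ} d_j$ consists of the negatives of the matrices in Theorem \ref{2thm3}, namely convex combinations of $+\tfrac{1}{2}(xx^T+yy^T) \geq 0$, which is exactly the sign needed. The key structural input — that every generalized gradient direction is a convex combination of the positive semidefinite rank-type matrices $\tfrac{1}{2}(xx^T+yy^T)$ built from normalized symplectic eigenvector pairs — is precisely what Theorem \ref{2thm3} supplies, so the only real work is the clean application of the mean value inequality for locally Lipschitz functions along the segment joining $A$ to $B$, together with the elementary fact that the trace pairing of two positive semidefinite matrices is nonnegative.
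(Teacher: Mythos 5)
Your proposal is correct and follows essentially the same route as the paper's proof: Lebourg's mean value theorem along the segment from $A$ to $B$, the characterization of the Clarke subdifferential from Theorem \ref{2thm3} (transferred from $-d_j$ to $d_j$ via $\partial^{\circ}(-f)=-\partial^{\circ}f$), and the fact that $\tfrac{1}{2}(xx^T+yy^T)$ is positive semidefinite so its trace pairing with $B-A\geq 0$ is nonnegative. The only cosmetic difference is that you justify local Lipschitzness of $d_j$ through the d.c.\ decomposition $2d_j=\sigma_{j-1}-\sigma_j$ (valid, since convex functions finite on an open set are locally Lipschitz there), whereas the paper cites a perturbation result of Idel, Gaona and Wolf.
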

\begin{proof}
By $\text{Theorem } 3.1$ of \cite{idel} we know that $-d_m$ is a locally Lipschitz function. 
Let  $A, B$ be elements of $\mathbb{P}(2n).$
By Lebourg mean value theorem \cite[Theorem 1.7]{lebourg}, there exist $P$ in $ \mathbb{P}(2n)$ and  $C$ in $ \partial^{\circ} (-d_m)(P)$ such that 
\begin{align*}
(-d_m)(A)- & (-d_m)(B) = \langle C, A-B \rangle 
\end{align*}
But we know by $\text{Theorem } \ref{2thm3}$ that 
\begin{equation*} 
\partial^{\circ} (-d_m)(P) =  \conv \{-\frac{1}{2} (xx^T+yy^T):  (x,y) \in S_m(P)\}.
\end{equation*}
Therefore we have
\begin{equation} \label{2eqn30}
d_m(B) - d_m(A) \in \conv \{\frac{1}{2} \langle xx^T+yy^T, B-A \rangle:   (x,y) \in S_m(P)\}.
\end{equation}
Thus, $A \leq B$ implies 
$$\conv \{\frac{1}{2} \langle xx^T+yy^T, B-A \rangle :  (x,y) \in S_m(P)\} \subseteq [0, \infty).$$
By $(\ref{2eqn30})$ we conclude $ d_m(B) \geq d_m(A).$

\end{proof}

\vskip.2in
{\bf\it{Acknowledgement}}: This work has been done under the guidance of Prof. Tanvi Jain, supervisor of my doctoral studies.

\vskip0.2in

\end{document}